\documentclass[reqno,11pt]{amsart}

\usepackage[utf8]{inputenc}
\usepackage[T1]{fontenc}
\usepackage{amssymb}
\usepackage{amsthm}
\usepackage{amsmath}
\usepackage{latexsym}
\usepackage{times}
\usepackage{xcolor}
\usepackage[mathscr]{eucal}
\usepackage{tikz}
\usetikzlibrary{calc}

\newcommand{\acr}{\newline\indent}

\numberwithin{equation}{section}

\allowdisplaybreaks

\newtheorem{theorem}{Theorem}[section]

\newtheorem{corollary}[theorem]{Corollary}
\newtheorem{remark}[theorem]{Remark}
\newtheorem{definition}[theorem]{Definition}
\newtheorem{example}[theorem]{Example}

\title[The Eguchi Construction Arises from Bi-Lagrangian Structures]{The Eguchi Construction Arises from Bi-Lagrangian Structures}

\author[Emmanuel Gnandi, St\'ephane Puechmorel]{Emmanuel Gnandi*, St\'ephane Puechmorel**}

\address{\llap{*\,}INSA de Toulouse\acr
	D\'epartement de G\'enie Math\'ematique\acr
	Universit\'e de Toulouse\acr
	135 avenue de Rangueil\acr
	31077 Toulouse cedex 4\acr
	France}
\email{kpanteemmanuel@gmail.com, gnandi@insa-toulouse.fr}

\address{\llap{**\,}ENACLab\acr
	Ecole Nationale de l'Aviation Civile\acr
	7, Avenue Edouard Belin\acr
	31055 Toulouse\acr
	France}
\email{stephane.puechmorel@enac.fr}

\subjclass[2020]{Primary 53A15, 53C15, 53B05; Secondary 53C55, 53D35, 57R17, 53C50, 53C25}

\keywords{Eguchi's construction, statistical divergence, statistical connections, bi-Lagrangian geometry, para-K\"ahler geometry}

\begin{document}

\begin{abstract}
We propose a new geometric interpretation of Eguchi's construction, which associates to a statistical divergence, an essential ingredient of Information Geometry, a statistical structure on a differentiable manifold. Our approach is based on bi-Lagrangian geometry. Building on the work of O.~E.~Barndorff-Nielsen and P.~E.~Jupp~\cite{Barndorff}, we show that a statistical divergence naturally induces a bi-Lagrangian structure, hence a para-K\"ahler structure. Within this framework, the Riemannian metric $g^D$ and the pair of dual connections $(\nabla^D,{}^*\nabla^D)$ of Eguchi's construction are induced by the para-K\"ahler structure: $g^D$ is the restriction to the diagonal of the para-K\"ahler metric $G$ of signature $(d,d)$, while $\nabla^D$ and ${}^*\nabla^D$ are obtained, through the two natural projections, from the Bott connections of the two transverse Lagrangian foliations, which are themselves the restrictions of the Levi-Civita connection of $G$ (the Hess, K\"unneth or bi-Lagrangian connection).

This perspective shows that the statistical structure associated with a divergence is not merely the formal outcome of successive differentiations, but the manifestation of an underlying bi-Lagrangian geometry. Our approach thus provides an intrinsic and unified understanding of Eguchi's construction, highlights the connections between information geometry, symplectic geometry and para-K\"ahler geometry, and opens new research perspectives at the interface of these fields.
\end{abstract}

\maketitle

\begin{center}
\large\itshape
Dedicated to the memory of H\^ong V\^an L\^e, in recognition of her contributions to Information Geometry. We remember her with gratitude and affection.

\vspace{0.2cm}

Dedicated to the memory of Paulette Libermann, who in 1952 introduced almost paracomplex and paracomplex structures, in analogy with the almost complex and complex cases, as well as para-Hermitian and para-K\"ahler metrics, the paracomplex counterparts of Hermitian and K\"ahler geometry. These structures play a central role throughout this paper.
\end{center}

\section{Introduction}\label{sec:intro}

Information geometry studies families of probability distributions through the tools of differential geometry. Under suitable regularity conditions, a parametric statistical model carries a natural Riemannian metric, the Fisher--Rao metric~\cite{Rao1945}, together with a one-parameter family of affine connections~\cite{Amari2012,Chentsov1982}. Abstracting these data, Lauritzen~\cite{Lauritzen1987} introduced statistical manifolds: Riemannian manifolds endowed with a pair of torsion-free connections that are dual with respect to the metric. By a result of L\^e~\cite{Le2006}, every compact statistical manifold can be realized as a finite-dimensional statistical model.

A second source of statistical structures is provided by statistical divergences, also called contrast functions~\cite{Eguchi} or yokes~\cite{Barndorff}. These are functions of two points of a manifold that measure their dissimilarity, the Kullback--Leibler and Bregman divergences being prominent examples. Divergences serve as loss functions in variational problems, with applications ranging from imaging to machine learning, and play a central role in information geometry, statistical inference, optimization and optimal transport.

\emph{Eguchi's construction}~\cite{Eguchi,Eguchi1992} associates to a divergence $D\colon M\times M\to\mathbb{R}$ a statistical structure $(g^D,\nabla^D,{}^*\nabla^D)$ on $M$ by taking derivatives of $D$ with respect to its two arguments and evaluating them along the diagonal of $M\times M$. Here $\partial_i$ and $\partial'_j$ denote derivatives with respect to the first and second arguments, respectively. The metric is the negative mixed second derivative,
\[
  g^D_{ij}=-\left.\partial_i\partial'_jD\right|_{y=x}.
\]
The Christoffel symbols of the connection $\nabla^D$ are given by a third derivative involving two derivatives in the first variable and one in the second,
\[
  \Gamma^D_{ij,k}=-\left.\partial_i\partial_j\partial'_kD\right|_{y=x},
\]
while those of its dual ${}^*\nabla^D$ are given by the third derivative with the roles of the variables exchanged,
\[
  \Gamma^{*D}_{ij,k}=-\left.\partial_k\partial'_i\partial'_jD\right|_{y=x}.
\]
The two connections are torsion-free and dual with respect to $g^D$, and their average is the Levi-Civita connection of $g^D$. This construction has played a major role in the geometric formalization of Information Geometry.

Conversely, Matumoto~\cite{Matumoto1993} showed that every statistical manifold admits a divergence inducing its structure; this divergence is not unique. When $D$ is a Bregman divergence, the induced structure is dually flat, i.e.\ $\nabla^D$ and ${}^*\nabla^D$ are both flat~\cite{Amari,nagaoka2026differential}. On a dually flat manifold, Amari and Nagaoka introduced a canonical divergence, which is a Bregman divergence~\cite{Amari,nagaoka2026differential}.

Eguchi's construction is usually presented in local coordinates, the metric and the dual connections being obtained by successive differentiation of the divergence. A natural question is whether it admits an intrinsic geometric interpretation. In this paper we show that it arises from an underlying bi-Lagrangian geometry.

Our starting point is the work of Barndorff-Nielsen and Jupp~\cite{Barndorff}, who relate yokes to symplectic geometry. The mixed Hessian of a divergence $D$ defines a symplectic form $\omega_D$ on a neighborhood $\mathcal U$ of the diagonal $\Delta_M$ in $M\times M$, and the two projections of $M\times M$ onto its factors define two transverse Lagrangian foliations of $\mathcal U$ (see Figure~\ref{fig:bi-lagrangian-neighborhood}). Thus a divergence induces a bi-Lagrangian structure. Since bi-Lagrangian structures are equivalent to para-K\"ahler structures~\cite{etayo2006geometry,Gordejuela}, $\mathcal U$ carries a pseudo-Riemannian metric $G$ of neutral signature, whose Levi-Civita connection is the Hess connection~\cite{Hess,Hess2005}. The statistical structure of Eguchi's construction is recovered from this para-K\"ahler geometry by restriction to the diagonal and through the two projections.

Our main results are the following. Theorem~\ref{thm:eguchi} gives a proof of Eguchi's theorem based on the bi-Lagrangian structure induced by a divergence: the metric is the restriction of $G$ to the diagonal, and the dual connections are induced by the Hess connection. Through the two projections, we express these connections in terms of the Bott connections of the two foliations, and show that the resulting structure coincides with Eguchi's (Corollary~\ref{cor:eguchi_coincidence}). Conversely, Theorem~\ref{thm:equivalence_statistical_bilagrangian} shows that every statistical structure on $M$ induces a bi-Lagrangian structure on a neighborhood of the diagonal, and gives conditions under which, conversely, a bi-Lagrangian structure on a neighborhood of the diagonal induces a statistical structure on $M$ in the sense of Kurose.

The paper is organized as follows. Section~\ref{sec:basics} recalls the basic notions: statistical manifolds with examples, bi-Lagrangian structures and the Hess connection. Section~\ref{sec:Result} contains the main results.

\section{Basic Notions}\label{sec:basics}

All manifolds in this paper are assumed to be connected and without boundary, unless otherwise stated.

\subsection{Statistical manifolds}

We recall some basic facts about statistical manifolds; see~\cite{Amari2012,Lauritzen1987} for details. The notion of statistical structure provides the geometric framework of information geometry.

Consider the set of parametric densities
\[
  \mathcal{P}_d(\Omega)
  =\left\{
    p(\cdot\,;\theta)\;\middle|\;
    \begin{aligned}
      &p(x;\theta)>0,\\
      &\int_{\Omega}p(x;\theta)\,dx=1,\\
      &\theta\in\Theta
    \end{aligned}
  \right\},
\]
where $\Theta\subset\mathbb{R}^d$ is open, $\Omega$ is a sample space and $\theta=(\theta^1,\dots,\theta^d)$ are the parameters. Following Amari~\cite{Amari2012}, $\mathcal{P}_d(\Omega)$ is called a $d$-dimensional \emph{statistical model}. We assume that $\theta\mapsto p(\,\cdot\,;\theta)$ is one-to-one, so that $\theta$ is a global coordinate system. Under suitable regularity conditions, Rao~\cite{Rao1945} showed that $\mathcal{P}_d(\Omega)$ is a Riemannian manifold for the Fisher information metric $g_F$, with components
\[
  g^F_{ij}(\theta)
  =4\int_{\Omega}
  \frac{\partial\sqrt{p(x;\theta)}}{\partial\theta^i}\,
  \frac{\partial\sqrt{p(x;\theta)}}{\partial\theta^j}\,dx.
\]

Amari~\cite{Amari2012} and \v{C}encov~\cite{Chentsov1982} introduced a one-parameter family of $\alpha$-connections $\nabla^{(\alpha)}$, $\alpha\in\mathbb{R}$, satisfying
\[
  \nabla^{(\alpha)}g_F=\alpha\,C_{\mathrm{A\text{-}C}},
\]
where $C_{\mathrm{A\text{-}C}}$ is the totally symmetric Amari--\v{C}encov cubic tensor,
\[
  C_{ijk}(\theta)
  =\int_{\Omega}
  \partial_i\log p(x;\theta)\,
  \partial_j\log p(x;\theta)\,
  \partial_k\log p(x;\theta)\,
  p(x;\theta)\,dx.
\]
In local coordinates,
\[
  \Gamma^{(\alpha)}_{ijk}=\Gamma^{(0)}_{ijk}-\frac{\alpha}{2}\,C_{ijk},
\]
where $\Gamma^{(0)}_{ijk}$ are the Christoffel symbols of the Levi-Civita connection of $g_F$. Since $C_{ijk}$ is totally symmetric, each $\nabla^{(\alpha)}$ is torsion-free, $\nabla^{(0)}$ is the Levi-Civita connection of $g_F$, and $\nabla^{(\alpha)}$ and $\nabla^{(-\alpha)}$ are dual with respect to $g_F$.

Inspired by this construction, Lauritzen~\cite{Lauritzen1987} introduced the following notion.

\begin{definition}[\cite{Lauritzen1987}]\label{def:stat_manifold}
  A \emph{statistical manifold} is a triplet $(M,g,\nabla)$, where $g$ is a Riemannian metric and $\nabla$ a torsion-free affine connection such that $\nabla g$ is totally symmetric. Equivalently, it is a quadruplet $(M,g,\nabla,\nabla^*)$, where $\nabla$ and $\nabla^*$ are torsion-free connections dual with respect to $g$:
  \begin{equation}\label{eq:dual_connection}
    X\,g(Y,Z)=g(\nabla_XY,Z)+g(Y,\nabla^*_XZ),
    \qquad \forall\,X,Y,Z\in\Gamma(TM).
  \end{equation}
\end{definition}

Statistical manifolds are also called Codazzi manifolds; see~\cite{Delanoe1989,nomizu1992notes}. The pair $(\nabla,\nabla^*)$ of \emph{statistical connections} leads to the \emph{skewness tensor} $C$,
\begin{equation}\label{eq:skewness_def}
  C(X,Y,Z)=g(\nabla_XY-\nabla^*_XY,\,Z)=-(\nabla_Xg)(Y,Z),
\end{equation}
which is totally symmetric. The curvature tensors satisfy the duality relation
\begin{equation}\label{eq:curvature_duality}
  g\bigl(R^\nabla(Z,W)X,\,Y\bigr)+g\bigl(R^{\nabla^*}(Z,W)Y,\,X\bigr)=0,
  \qquad \forall\,X,Y,Z,W\in\Gamma(TM).
\end{equation}

Statistical models are statistical manifolds. A fundamental question raised by Lauritzen and Amari is whether every statistical manifold is a finite-dimensional statistical model. Hong Van L\^e~\cite{Le2006} answered affirmatively: combining Nash's embedding theorem~\cite{Nash1956,Nash1954}, applied to the metric, with Gromov's immersion theorem~\cite{Gromov1986}, applied to the Amari--\v{C}encov tensor, she showed that every compact statistical manifold admits such a realization.

Statistical structures are closely related to symplectic geometry. In~\cite{Delanoe1989}, P.~Delano\"e showed that, for a torsion-free connection $\nabla$ on a Riemannian manifold $(M,g)$, the triple $(M,g,\nabla)$ is a statistical manifold if and only if the image of the horizontal subbundle $H\subset T(TM)$ defined by $\nabla$ under the tangent map
\[
  T\mathcal{X}_g\colon T(TM)\longrightarrow T(T^{*}M)
\]
is a Lagrangian subbundle of $T(T^{*}M)$, where $T^*M$ carries its canonical symplectic form and $\mathcal{X}_g\colon v\mapsto g(v,\cdot)$. Dombrowski~\cite{Dombrowski1962} and Satoh~\cite{Satoh2019} further constructed an almost K\"ahler structure on the tangent bundle of a statistical manifold.

Among statistical structures, the dually flat ones~\cite{Amari}, also called Hessian structures~\cite{Shima2007}, play an important role in information geometry and optimization. By~\eqref{eq:curvature_duality}, if $\nabla$ is flat then so is $\nabla^{*}$; $(M,g,\nabla,\nabla^{*})$ is then a \emph{dually flat} or \emph{Hessian manifold}. Every compact dually flat manifold has an infinite fundamental group~\cite{AyTuschmann2002}; in particular $\mathbb{S}^{l}$, $l>1$, admits no dually flat structure. A classification of compact orientable $3$-dimensional Hessian manifolds is given in~\cite{Gnandi2026}.

\subsection{Examples of statistical manifolds}

\begin{example}[Trivial statistical manifold]\label{ex:trivial}
  A statistical manifold is \emph{trivial} if $\nabla=\nabla^{*}=\nabla^{\mathrm{LC}}$, the Levi-Civita connection of $g$.
\end{example}

\begin{example}[Univariate normal distributions]\label{ex:normal}
  Let $M=\{(\mu,\sigma)\in\mathbb{R}^2:\sigma>0\}$ with the metric $g:=(d\mu^{2}+2\,d\sigma^{2})/\sigma^{2}$. Let $\nabla$ be the flat connection for which $\theta^1:=\mu/\sigma^{2}$, $\theta^2:=-1/(2\sigma^{2})$ is an affine chart, and $\nabla^{*}$ its dual with respect to $g$, for which $\eta_1:=\mu$, $\eta_2:=\mu^{2}+\sigma^{2}$ is an affine chart. Then $(M,g,\nabla,\nabla^{*})$ is a dually flat manifold.
\end{example}

\begin{example}[Almost contact structure]\label{ex:almost_contact}
  Let $(M,g)$ be a Riemannian manifold and $\xi$ a nowhere vanishing vector field. Then
  \[
    \nabla_XY:=\nabla^{\mathrm{LC}}_XY+g(X,\xi)\,g(Y,\xi)\,\xi,\qquad
    \nabla^{*}_XY:=\nabla^{\mathrm{LC}}_XY-g(X,\xi)\,g(Y,\xi)\,\xi
  \]
  define a statistical manifold $(M,g,\nabla,\nabla^{*})$. This applies in particular to almost contact metric manifolds, which we recall. Let $\dim M=2n+1$.
  \begin{enumerate}
    \item\label{item:acs} An \emph{almost contact structure} is a triple $(\varphi,\xi,\eta)$, with $\varphi$ a $(1,1)$-tensor field, $\xi$ a vector field and $\eta$ a $1$-form, such that
    \begin{align}
      \eta(\xi)&=1, \label{eq:eta-xi}\\
      \eta\circ\varphi&=0, \label{eq:eta-phi}\\
      \varphi^{2}&=-\mathrm{id}+\eta\otimes\xi. \label{eq:phi2}
    \end{align}
    \item Given a pseudo-Riemannian metric $g$, the quadruple $(\varphi,\xi,\eta,g)$ is an \emph{almost contact metric structure} if $(\varphi,\xi,\eta)$ is as in item~\ref{item:acs} and, for some $\varepsilon\in\{\pm1\}$,
    \begin{align}
      g(\xi,\xi)&=\varepsilon, \qquad \eta=\varepsilon\,\xi^{\flat}, \label{eq:g-xi-xi}\\
      g(\varphi X,\varphi Y)&=g(X,Y)-\varepsilon\,\eta(X)\eta(Y),
        \qquad X,Y\in\mathfrak{X}(M). \label{eq:g-phi-compat}
    \end{align}
    Equivalently, it is a reduction of the frame bundle of $M$ to $U(n)\times\{1\}$.
  \end{enumerate}
  For such a structure and $\beta\in\mathbb{R}$, set
  \[
    \nabla^{\pm\beta}_XY:=\nabla^{\mathrm{LC}}_XY\mp\beta\,\eta(X)\eta(Y)\,\xi.
  \]
  Then $(M,g,\nabla^{\beta},\nabla^{-\beta})$ is a statistical manifold: $\nabla^{\pm\beta}$ are torsion-free, mutually dual with respect to $g$, and $(\nabla^{\beta}_Xg)(Y,Z)=2\beta\varepsilon\,\eta(X)\eta(Y)\eta(Z)$ is totally symmetric.
\end{example}

\begin{example}[Symplectic manifold]\label{ex:sy}
  A \emph{symplectic connection}~\cite{Bieliavsky2006} on a symplectic manifold $(M,\omega)$ is a torsion-free connection for which $\omega$ is parallel. Let $\nabla^0$ be a torsion-free connection and $N$ the tensor defined by
  \[
    \nabla^0_X\omega(Y,Z)=:\omega\bigl(N(X,Y),Z\bigr),\qquad X,Y,Z\in\mathfrak{X}(M).
  \]
  Since $\omega$ is closed, $\circlearrowleft_{XYZ}\,\omega\bigl(N(X,Y),Z\bigr)=0$, where $\circlearrowleft$ denotes the sum over cyclic permutations. Then
  \[
    \nabla_XY:=\nabla^0_XY+\tfrac13N(X,Y)+\tfrac13N(Y,X)
  \]
  is a symplectic connection.

  Any other connection reads $\nabla'_XY=\nabla_XY+A(X)Y$, with $A$ a $1$-form valued in $\mathrm{End}(TM)$. It is torsion-free iff $A(X)Y=A(Y)X$, and symplectic if furthermore
  \[
    0=\nabla'_X\omega(Y,Z)=-\omega\bigl(A(X)Y,Z\bigr)-\omega\bigl(Y,A(X)Z\bigr),
  \]
  that is, iff $\widehat A(X,Y,Z):=\omega\bigl(A(X)Y,Z\bigr)$ is totally symmetric. Thus symplectic connections form an affine space modeled on totally symmetric $3$-tensors.

  Fix a compatible almost complex structure $J$, set $g(X,Y)=\omega(X,JY)$, and let $\nabla^{\mathrm{LC}}$ be its Levi-Civita connection. For $a\in\mathbb{R}$ put
  \[
    \nabla^{\pm a}_XY=\nabla^{\mathrm{LC}}_XY\mp\tfrac a2\,J\bigl(A(X)Y\bigr).
  \]
  Then
  \begin{align*}
    \nabla^a+\nabla^{-a}&=2\nabla^{\mathrm{LC}},\\
    X\,g(Y,Z)&=g(\nabla^a_XY,Z)+g(Y,\nabla^{-a}_XZ),
  \end{align*}
  and $(g,\nabla^a,\nabla^{-a})$ is a statistical structure on $M$, non-trivial whenever $a\neq0$ and $A\neq0$.
\end{example}

We conclude with the pseudo-Riemannian generalization due to Kurose~\cite{Kurose1994}. Let $G$ be a pseudo-Riemannian metric on $M$ and let $\nabla$ be a torsion-free connection. The pair $(G,\nabla)$ is called a \emph{statistical structure} on $M$ when the covariant derivative $\nabla G$ is totally symmetric. This condition generalizes Lauritzen's definition to the pseudo-Riemannian setting.

\subsection{Bi-Lagrangian structures and the Hess connection}

Bi-Lagrangian structures lie at the crossroads of symplectic geometry, affine differential geometry, web geometry, pseudo-Riemannian geometry and foliation theory. They are related to Teichm\"uller theory~\cite{loustau2017bi}, to geometric quantization in the sense of Kostant--Souriau~\cite{simms1976lectures}, and play a role in mathematical physics. Moreover, symplectic connections are a key tool for the formal deformation quantization of Flato, Lichnerowicz and Sternheimer~\cite{bayen1977quantum,flato1976crochet}.

A strong link is provided by para-K\"ahler structures, introduced by P.~Libermann~\cite{libermann1954probleme,libermann1952structures}. F.~Etayo Gordejuela and his coauthors~\cite{etayo2006geometry,Gordejuela} established the equivalence between para-K\"ahler and bi-Lagrangian structures. Vaisman introduced generalized para-K\"ahler structures~\cite{vaisman2015generalized}, and Leichtnam, Tang and Weinstein para-K\"ahler Lie algebroids~\cite{leichtnam2007poisson}.

\begin{definition}
  A \emph{Lagrangian submanifold} of a symplectic manifold $(M^{2n},\omega)$ is an $n$-dimensional submanifold on which $\omega$ vanishes. A foliation whose leaves are Lagrangian submanifolds is a \emph{Lagrangian foliation}.
\end{definition}

\begin{definition}
  A \emph{bi-Lagrangian structure} on a symplectic manifold $(M,\omega)$ is a pair $(\mathcal{F}_1,\mathcal{F}_2)$ of transverse Lagrangian foliations. It is also called a Lagrangian $2$-web~\cite{tabachnikov1993geometry}, a K\"unneth structure~\cite{guillemin2000kunneth,hamilton2023kunneth}, or a bipolarized symplectic structure~\cite{kanai1988geodesic}.
\end{definition}

\begin{definition}
  A \emph{para-K\"ahler structure} on $M$ is a triple $(G,F,\omega)$, where $G$ is a pseudo-Riemannian metric, $F$ a para-complex structure ($F^2=\mathrm{id}$), and $\omega=G(F\cdot,\cdot)$ a symplectic form, such that $\nabla^{G}F=0$, where $\nabla^{G}$ is the Levi-Civita connection of $G$.
\end{definition}

A bi-Lagrangian structure carries a distinguished connection, the \emph{Hess connection}, also called the \emph{K\"unneth} or \emph{bi-Lagrangian connection}~\cite{Hess,Hess2005}. It is the unique symplectic connection ($\nabla\omega=0$) preserving both foliations $\mathcal{F}_1$ and $\mathcal{F}_2$ and with vanishing mixed torsion,
\[
  T(X,Z)=0,\qquad X\in\Gamma(\mathcal{F}_1),\ Z\in\Gamma(\mathcal{F}_2).
\]

Let $(M,\omega,\mathcal{F}_1,\mathcal{F}_2)$ be a bi-Lagrangian manifold, so that $TM=\mathcal{F}_1\oplus\mathcal{F}_2$. Following Hess~\cite{Hess,Hess2005}, for $X,Y\in\Gamma(\mathcal{F}_1)$ and $Z,W\in\Gamma(\mathcal{F}_2)$,
\begin{equation}\label{eq:hess}
  \begin{aligned}
    \nabla^{\omega}_{X}Y &= \operatorname{pr}_1\,\mathcal{X}_{\omega}^{-1}\,\mathcal{L}_{X}\,\mathcal{X}_{\omega}(Y),
    &\qquad
    \nabla^{\omega}_{Z}Y &= \operatorname{pr}_1\,[Z,Y],\\[10pt]
    \nabla^{\omega}_{Z}W &= \operatorname{pr}_2\,\mathcal{X}_{\omega}^{-1}\,\mathcal{L}_{Z}\,\mathcal{X}_{\omega}(W),
    &\qquad
    \nabla^{\omega}_{X}W &= \operatorname{pr}_2\,[X,W],
  \end{aligned}
\end{equation}
where $\mathcal{X}_{\omega}\colon\mathfrak{X}(M)\to\Omega^{1}(M)$, $\mathcal{X}_{\omega}(X)=\iota_X\omega$, is the isomorphism induced by $\omega$, and $\operatorname{pr}_i\colon TM\to\mathcal{F}_i$ is the projection associated with $TM=\mathcal{F}_1\oplus\mathcal{F}_2$.

\begin{remark}
  Hess connections give a way to construct symplectic connections on an arbitrary symplectic manifold $(M,\omega)$. By Darboux's theorem, every point has a neighborhood $U$ with coordinates $(q^i,p_i)$ in which
  \[
  \omega|_U=\mathrm{d}p_i\wedge\mathrm{d}q^i .
  \]
  The foliations $\mathcal{F}_1=\{p_i=\mathrm{const}\}$ and $\mathcal{F}_2=\{q^i=\mathrm{const}\}$ form a pair of transverse Lagrangian foliations; therefore $(\omega|_U,\mathcal{F}_1,\mathcal{F}_2)$ is a local bi-Lagrangian structure on $U$, with Hess connection $\nabla^{\mathrm{Hess}}_U$~\cite{Hess}. For a locally finite cover $(U_j)_j$ by such neighborhoods and a subordinate partition of unity $(\phi_j)_j$, the convex combination
  \[
  \nabla=\sum_j\phi_j\,\nabla^{\mathrm{Hess}}_{U_j}
  \]
  is a global symplectic connection.
\end{remark}

\section{Eguchi's Construction, Bi-Lagrangian Structures, and Para-K\"ahler Structures}
\label{sec:Result}

\subsection{Divergences and Eguchi's construction}

Statistical divergences measure the dissimilarity between probability distributions, and many of them have been introduced in information geometry. They are in general asymmetric and do not satisfy the triangle inequality; for a comprehensive account, see Amari~\cite{Amari}, Nielsen~\cite{nielsen2020elementary} and Calin--Udri\c{s}te~\cite{Calin2014}. In a broad sense, every distance is a symmetric divergence satisfying the triangle inequality; a notable class of divergences that are not distances is obtained by squaring a distance. Prominent examples of divergence functions include the Kullback--Leibler divergence~\cite{Amari}, the Bregman divergence~\cite{Bregman}, the $f$-divergences~\cite{Csiszar,AliSilvey,CsiszarTop}, the $\Phi$-divergence~\cite{Zhang}, the Kagan divergence~\cite{Kagan}, the Chernoff information~\cite{Chernoff}, the Jeffreys divergence~\cite{RobertChopinRousseau}, and exponential contrast functions. Divergences play a fundamental role in information theory~\cite{Arndt2012}, statistics~\cite{Eguchi2022}, machine learning~\cite{Wang2022}, quantitative finance~\cite{Taylor2019}, signal processing~\cite{Basseville2013} and neuroscience.

\medskip
Geometrically, a divergence historically called a contrast function~\cite{Eguchi} or a yoke~\cite{Barndorff} is a smooth function $D\colon M\times M\to\mathbb{R},$
written $D(x:y)$, such that:
\begin{enumerate}
  \item $D(x:y)\geq 0$ for all $x,y\in M$, with equality if and only if $x=y$;
  \item $\partial_iD(x:y)\big|_{y=x}=0$ and $\partial'_jD(x:y)\big|_{y=x}=0$ for all $i,j$, where $\partial_i=\partial/\partial x^i$ and $\partial'_j=\partial/\partial y^j$;
  \item the tensor
  \[
  g_{ij}(x)=-\partial_i\partial'_jD(x:y)\big|_{y=x}
  \]
  is positive definite and therefore defines a Riemannian metric on $M$.
\end{enumerate}

\subsubsection*{Examples}

\emph{Squared distances.} For $x,y\in\mathbb{R}^n$, the squared Euclidean and Mahalanobis distances are
\begin{align*}
  D_E(x\,\|\,y)&=\|x-y\|_2^2=x^\top x+y^\top y-2x^\top y,\\
  D_M(x\,\|\,y)&=\|x-y\|_{S^{-1}}^2=x^\top S^{-1}x+y^\top S^{-1}y-2x^\top S^{-1}y,
\end{align*}
where $S$ is symmetric positive definite. For $A,B\in\mathbb{R}^{m\times n}$, the squared Frobenius distance is
\[
  D_F(A\,\|\,B)=\|A-B\|_F^2
  =\operatorname{tr}\bigl(A^\top A+B^\top B-2A^\top B\bigr).
\]

\smallskip
\emph{Burg divergence.} For symmetric positive-definite matrices $S,T\in\mathbb{R}^{n\times n}$,
\[
  D_{\mathrm{Burg}}(S\,\|\,T)
  =\operatorname{tr}\bigl(ST^{-1}\bigr)
  -\log\det\bigl(ST^{-1}\bigr)-n.
\]

\smallskip
\emph{Kullback--Leibler divergence of Gaussian distributions.}
For $\mu,\mu'\in\mathbb{R}^n$ and symmetric positive-definite covariance matrices $\Sigma,\Sigma'$,
\begin{align*}
  D_{\mathrm{NKL}}\bigl((\mu,\Sigma)\,\|\,(\mu',\Sigma')\bigr)
  &=D_{\mathrm{KL}}\bigl(
      f_{\mathcal N(\mu,\Sigma)}\,\big\|\,
      f_{\mathcal N(\mu',\Sigma')}
    \bigr)\\
  &=\frac12\Bigl[
      (\mu-\mu')^\top\Sigma'^{-1}(\mu-\mu')
      +\operatorname{tr}\bigl(\Sigma\Sigma'^{-1}\bigr)
      -\log\det\bigl(\Sigma\Sigma'^{-1}\bigr)
      -n
    \Bigr],
\end{align*}
where
\[
  D_{\mathrm{KL}}\bigl(f\,\|\,f'\bigr)
  \equiv\mathbb{E}_f\!\left[\ln\frac{f(X)}{f'(X)}\right]
  =\int f(x)\,\ln\frac{f(x)}{f'(x)}\,dx .
\]

\smallskip
\emph{Bregman divergence.} The Bregman divergence induced by a differentiable strictly convex function $\psi$ is defined as the gap between the value of $\psi$ at $m$ and the value at $m$ of the plane tangent to $\psi$ at $n$:
\[
  D^{\mathrm{B}}_{\psi}(m\,\|\,n)
  \equiv
  \psi(m)
  -
  \bigl[
    \psi(n)
    +
    \nabla_n\psi(n)^\top(m-n)
  \bigr].
\]
\subsubsection*{Eguchi's construction}

Eguchi~\cite{Eguchi,Eguchi1992} showed that the second- and third-order derivatives of a divergence $D$ canonically define a statistical structure $(g^D,\nabla^D,{}^*\nabla^D)$ on $M$:
\begin{align}
  g^D(X,Y)&=-D[X\mid Y], \label{eq:metric}\\
  g^D\bigl(\nabla^D_XY,Z\bigr)&=-D[XY\mid Z], \label{eq:connection1}\\
  g^D\bigl({}^*\nabla^D_XY,Z\bigr)&=-D[Z\mid XY]. \label{eq:connection2}
\end{align}
Here, for $X_1,\dots,X_n,Y_1,\dots,Y_s\in\mathfrak{X}(M)$, the vector fields $X_i$ differentiate $D$ in its first variable, while the $Y_j$ differentiate it in its second variable:
\[
  D[X_1\cdots X_n\mid Y_1\cdots Y_s](p)
  =
  (X_1)_p\cdots(X_n)_p\,
  (Y_1)_q\cdots(Y_s)_q\,
  D(p,q)\big|_{q=p}.
\]
The connections $\nabla^D$ and ${}^*\nabla^D$ are torsion-free and dual with respect to $g^D$; that is,
\[
  X\bigl(g^D(Y,Z)\bigr)
  =
  g^D\bigl(\nabla^D_XY,Z\bigr)
  +g^D\bigl(Y,{}^*\nabla^D_XZ\bigr),
  \qquad X,Y,Z\in\mathfrak{X}(M).
\]
Moreover, the Levi-Civita connection of $g^D$ is
\[
  \nabla^{\mathrm{LC}}
  =\frac12\bigl(\nabla^D+{}^*\nabla^D\bigr).
\]

\medskip
In local coordinates, the metric and the Christoffel symbols of the first kind,
\[
  \Gamma^D_{ij,k}
  =g^D\bigl(\nabla^D_{\partial_i}\partial_j,\partial_k\bigr),
  \qquad
  \Gamma^{*D}_{ij,k}
  =g^D\bigl({}^*\nabla^D_{\partial_i}\partial_j,\partial_k\bigr),
\]
are given by~\cite{Eguchi,Eguchi1992}
\begin{align*}
  g^D_{ij}(x)
  &=-\left.\partial_i\partial'_jD(x:y)\right|_{y=x},\\
  \Gamma^D_{ij,k}(x)
  &=-\left.\partial_i\partial_j\partial'_kD(x:y)\right|_{y=x},\\
  \Gamma^{*D}_{ij,k}(x)
  &=-\left.\partial_k\partial'_i\partial'_jD(x:y)\right|_{y=x}.
\end{align*}
Eguchi thus gives a constructive proof that every divergence induces a statistical structure on the underlying manifold: he explicitly defines the Riemannian metric and the pair of affine connections, and proves that these connections are dual with respect to the metric.

\subsubsection*{Our viewpoint}

Our approach to Eguchi's theorem is based on a different geometric viewpoint, namely bi-Lagrangian, or para-K\"ahler, geometry. It makes apparent the symplectic structure underlying information geometry. The statistical structure induced by a divergence arises from the natural bi-Lagrangian structure on a neighborhood of the diagonal in $M\times M$. Rather than constructing the metric and the two dual connections separately, we derive them from this ambient geometry. Consequently, the statistical structure appears as a consequence of the bi-Lagrangian structure, rather than as a formal consequence of successive differentiations of $D$. This reveals a deep relationship between Eguchi's construction and para-K\"ahler geometry.

\subsection{A bi-Lagrangian proof of Eguchi's theorem}

\begin{theorem}[An Alternative Proof of Eguchi's Theorem]\label{thm:eguchi}
  Let $M$ be a $d$-dimensional differentiable manifold and
  $D\colon M\times M\to\mathbb{R}$ a divergence. Then $D$ induces a
  statistical structure $(g,\nabla,\nabla^*)$ on $M$.
\end{theorem}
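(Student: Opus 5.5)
The plan is to build the bi-Lagrangian structure on a neighborhood of the diagonal, read the metric and connections off it, and then verify torsion-freeness and duality.

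\emph{Step 1: the ambient structure.} Work on $M\times M$ with coordinates $(x^i,y^j)$ and set
\[
  \omega_D=-\partial_i\partial'_jD\,dx^i\wedge dy^j,
\]
the mixed Hessian of Barndorff-Nielsen and Jupp. It is closed, since locally $\omega_D=d\alpha$ with $\alpha=-\partial_iD\,dx^i$, up to the $dx\wedge dx$ part, which vanishes because it comes from a symmetric Hessian. It is nondegenerate exactly where the matrix $(\partial_i\partial'_jD)$ is invertible. On the diagonal this matrix equals $-g_{ij}$, which is positive definite by axiom (3), so $\omega_D$ is symplectic on some open neighborhood $\mathcal U$ of $\Delta_M$. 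Let $\mathcal F_1$ and $\mathcal F_2$ be the foliations by the fibres of the two projections $\pi_1,\pi_2$, namely $\{y=\mathrm{const}\}$ and $\{x=\mathrm{const}\}$. Both are Lagrangian, because $\omega_D$ has no $dx\wedge dx$ or $dy\wedge dy$ component, and they are transverse. This gives a bi-Lagrangian structure. The para-Kähler data are $F=\pm\mathrm{id}$ on $\mathcal F_1,\mathcal F_2$ and $G=\omega_D(F\cdot,\cdot)$, and the Levi-Civita connection of $G$ is the Hess connection $\nabla^{\omega}$ of \eqref{eq:hess}.

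\emph{Step 2: metric and connections.} For $X\in\mathfrak X(M)$ write $X^{(1)}=(X,0)$ and $X^{(2)}=(0,X)$; these are sections of $T\mathcal F_2$ and $T\mathcal F_1$ respectively, with the appropriate labelling. Define
\[
  g(X,Y)=\omega_D\bigl(X^{(1)},Y^{(2)}\bigr)\big|_{\Delta}.
\]
Since $G$ is nonzero only on mixed pairs, this is up to a factor the restriction of $G$ to the diagonal vector $X^{(1)}+X^{(2)}$. Concretely $g_{ij}=-\partial_i\partial'_jD|_{y=x}$, which is positive definite. Define $\nabla$ and $\nabla^*$ by
\[
  g(\nabla_XY,Z)=\omega_D\bigl(\nabla^{\omega}_{X^{(1)}}Y^{(1)},Z^{(2)}\bigr)\big|_{\Delta},\qquad
  g(Y,\nabla^*_XZ)=\omega_D\bigl(Y^{(1)},\nabla^{\omega}_{X^{(2)}}Z^{(2)}\bigr)\big|_{\Delta}.
\]
By \eqref{eq:hess}, the leafwise Hess connection is the Bott-type connection determined by $\mathcal X_\omega^{-1}\mathcal L$. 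In coordinates, $\iota_{Y^{(1)}}\omega_D=-Y^i\partial_i\partial'_kD\,dy^k$, and differentiating along $X^{(1)}$ gives
\[
  \omega_D\bigl(\nabla^{\omega}_{X^{(1)}}Y^{(1)},Z^{(2)}\bigr)=X^{(1)}\,\omega_D\bigl(Y^{(1)},Z^{(2)}\bigr)=-D[XY\mid Z].
\]
This recovers \eqref{eq:connection1}, and symmetrically \eqref{eq:connection2}. These are well-defined connections on $M$ because the expressions are tensorial in $X$ and $Z$ and satisfy the Leibniz rule in $Y$; $\nabla^*$ is handled similarly.

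\emph{Step 3: verification.} Torsion-freeness follows because $\nabla^{\omega}$ is torsion-free on each leaf. Equivalently, $D[XY\mid Z]-D[YX\mid Z]=D[[X,Y]\mid Z]$, and the Christoffel symbols $\partial_i\partial_j\partial'_kD$ are symmetric in $i,j$. For duality, differentiate $g(Y,Z)=-D[Y\mid Z]$ along $X$ on the diagonal. Because the diagonal vector is $X^{(1)}+X^{(2)}$,
\[
  X\,g(Y,Z)=-D[XY\mid Z]-D[Y\mid XZ],
\]
which is exactly $g(\nabla_XY,Z)+g(Y,\nabla^*_XZ)$. Intrinsically this is the parallelism $\nabla^{\omega}\omega_D=0$ applied along $X^{(1)}+X^{(2)}$, combined with the mixed Hess terms $\operatorname{pr}[\,\cdot,\cdot\,]$, which vanish on lifts from different factors since those commute. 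Definition~\ref{def:stat_manifold} then gives the statistical structure.

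The main obstacle is Step 2 together with the mixed part of Step 3: justifying that restricting Hess derivatives to $\Delta$ yields genuine connections on $M$, and correctly accounting for the mixed terms $\nabla^{\omega}_{X^{(2)}}Y^{(1)}=\operatorname{pr}_1[X^{(2)},Y^{(1)}]=0$ when one differentiates along the diagonal direction. I would handle this first in coordinates using the lifts $\partial_i^{(1)},\partial_j^{(2)}$, where all the brackets vanish, and only then rewrite the result intrinsically.
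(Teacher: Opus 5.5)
Your proposal is correct. Its skeleton is the paper's: $\omega_D$, the two fibre foliations, $G=\omega_D(F\cdot,\cdot)$, the two lifts, and connections defined by pairing a leafwise Hess derivative of one lift against the opposite lift on $\Delta_M$. The difference is in how the axioms are verified.

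The paper keeps Steps~4--6 intrinsic:
\begin{itemize}
  \item the lift identities give tensoriality;
  \item $\nabla^{\omega_D}G=0$ along $X^h+X^v$, together with the vanishing mixed terms $\operatorname{pr}_i[X^h,Y^v]=0$, gives duality;
  \item leafwise torsion-freeness of the Hess connection gives torsion-freeness.
\end{itemize}
Only afterwards, in Corollaries~\ref{cor:encoded} and~\ref{cor:eguchi_coincidence}, does it link the result to derivatives of $D$.

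You instead front-load the Bott identity $\omega_D(\nabla^{\omega}_{X^{(1)}}Y^{(1)},Z^{(2)})=X^{(1)}\omega_D(Y^{(1)},Z^{(2)})$, which lands you at once on $-D[XY\mid Z]$ and $-D[Y\mid XZ]$. You then check tensoriality, torsion and duality by commuting derivatives in the two slots of $D$. That is Eguchi's own computation; your intrinsic arguments appear only as glosses.

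What your route buys:
\begin{itemize}
  \item It proves Corollary~\ref{cor:eguchi_coincidence} at the same time, and with the right labelling. The paper's \eqref{eq:nabla_bott} attaches $\nabla$ to the vertical lifts, which yields $g(\nabla_{\partial_i}\partial_j,\partial_k)=-\partial_k\partial'_i\partial'_jD|_{y=x}$, i.e.\ ${}^*\nabla^D$; the corollaries then silently swap the names.
  \item Your identity $G(X^{(1)}+X^{(2)},Y^{(1)}+Y^{(2)})|_{\Delta_M}=2g(X,Y)$ makes literal the abstract's claim that $g^D$ is the restriction of $G$ to the diagonal. The paper only uses the mixed pairing $G(X^v,Y^h)$.
\end{itemize}

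What the paper's route buys: it uses $D$ only to produce $\omega_D$ and to get positivity of $g$. That is exactly why its argument transfers verbatim to the converse half of Theorem~\ref{thm:equivalence_statistical_bilagrangian}, where only $(\omega,\mathcal F_1,\mathcal F_2)$ is given. Your primary checks would first need a local potential for $\omega$.

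Two harmless slips: $d(-\partial_iD\,dx^i)=-\omega_D$, not $\omega_D$; and the fibres of $\pi_1$ are $\{x=\mathrm{const}\}$, not $\{y=\mathrm{const}\}$.
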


\begin{proof}

The idea is to work on a neighborhood of the diagonal in $M\times M$, where $D$
induces a bi-Lagrangian structure, and then to restrict to the diagonal in
order to recover the statistical structure on $M$.

\medskip
\emph{Step 1.}
Let $x=(x^1,\dots,x^d)$ and $y=(y^1,\dots,y^d)$ be local coordinates around
$p$ and $q$, respectively, so that $(x,y)$ are product coordinates on
$M\times M$ around $(p,q)$. Following Barndorff-Nielsen and
Jupp~\cite{Barndorff} (Section~3), consider the map
\[
  \varphi_D\colon M\times M\to T^*M,
  \qquad
  (x,y)\longmapsto
  \bigl(x,\,d_xD(\cdot,y)\bigr).
\]
Let $\theta$ be the Liouville $1$-form on $T^*M$ and let
$\omega_{\mathrm{can}}=-d\theta$ be the canonical symplectic form. Then
\[
  \omega_D:=\varphi_D^*\omega_{\mathrm{can}}
  =-d(\varphi_D^*\theta)
  =-\sum_{i,j=1}^d
  \frac{\partial^2D}{\partial x^i\partial y^j}(x,y)\,
  dx^i\wedge dy^j
\]
is exact. By the defining properties of a divergence, the mixed Hessian matrix
\[
  \left(
  -\frac{\partial^2D}{\partial x^i\partial y^j}(x,y)
  \right)_{i,j}
\]
is nondegenerate along the diagonal
\[
  \Delta_M=\{(p,p)\mid p\in M\}\subset M\times M,
\]
which is the graph of the identity map $\mathrm{id}_M\colon M\to M$. Hence
$\omega_D$ is symplectic on an open neighborhood
$\mathcal U\subset M\times M$ of $\Delta_M$.

The projections
\[
  p_1(x,y)=x,
  \qquad
  p_2(x,y)=y
\]
define two transverse foliations of $\mathcal U$:
\[
  \mathcal F_1
  =
  \bigl\{(\{x\}\times M)\cap\mathcal U\bigr\}_{x\in M},
  \qquad
  \mathcal F_2
  =
  \bigl\{(M\times\{y\})\cap\mathcal U\bigr\}_{y\in M}.
\]
Their tangent distributions are
\[
  T\mathcal F_1
  =
  \operatorname{span}\left\{
  \frac{\partial}{\partial y^1},\dots,
  \frac{\partial}{\partial y^d}
  \right\},
  \qquad
  T\mathcal F_2
  =
  \operatorname{span}\left\{
  \frac{\partial}{\partial x^1},\dots,
  \frac{\partial}{\partial x^d}
  \right\}
\]
(see Figure~\ref{fig:bi-lagrangian-neighborhood}). Since $\omega_D$ involves
only the mixed terms $dx^i\wedge dy^j$, it vanishes on pairs of vectors
tangent to the same foliation; equivalently,
\[
  \omega_D\big|_{T\mathcal F_1\times T\mathcal F_1}=0,
  \qquad
  \omega_D\big|_{T\mathcal F_2\times T\mathcal F_2}=0.
\]
Therefore $(\omega_D,\mathcal F_1,\mathcal F_2)$ is a bi-Lagrangian structure
on $\mathcal U$, and it is this structure that carries the whole construction.

\medskip

\emph{Step 2.}
By~\cite{Bryant2001,Gordejuela,Alekseevsky2025}, the bi-Lagrangian structure
$(\omega_D,\mathcal F_1,\mathcal F_2)$ determines a para-complex structure
$K\in\Gamma(\operatorname{End}(T\mathcal U))$ satisfying
\[
  K^2=\operatorname{Id},
  \qquad
  K\big|_{T\mathcal F_1}=-\operatorname{Id},
  \qquad
  K\big|_{T\mathcal F_2}=+\operatorname{Id}.
\]
It also determines the pseudo-Riemannian metric
\[
  G(X,Y):=\omega_D(KX,Y),
\]
which is symmetric and has neutral signature $(d,d)$. Moreover,
\[
  G\big|_{T\mathcal F_1\times T\mathcal F_1}=0,
  \qquad
  G\big|_{T\mathcal F_2\times T\mathcal F_2}=0,
\]
so $\mathcal F_1$ and $\mathcal F_2$ are $G$-null foliations.

By Hess's theorem~\cite{Hess}, there exists a unique symplectic connection
$\nabla^{\omega_D}$ on $(\mathcal U,\omega_D)$ which preserves both
$T\mathcal F_1$ and $T\mathcal F_2$; it coincides with the Levi-Civita
connection of $G$~\cite{Hess2005,Kaneyuki}.

Since $T\mathcal U=T\mathcal F_1\oplus T\mathcal F_2$, formula~\eqref{eq:hess}
applies to $(\mathcal U,\omega_D,\mathcal F_1,\mathcal F_2)$, with
\[
  \omega=\omega_D,
  \qquad
  \mathcal X_\omega=\mathcal X_{\omega_D},
  \qquad
  \operatorname{pr}_i=\operatorname{pr}_{T\mathcal F_i}.
\]
Thus, for $X,Y\in\Gamma(T\mathcal F_1)$ and
$Z,W\in\Gamma(T\mathcal F_2)$,
\begin{equation}\label{eq:hess_D}
  \begin{aligned}
    \nabla^{\omega_D}_{X}Y
      &=\operatorname{pr}_1\,
        \mathcal X_{\omega_D}^{-1}\,
        \mathcal L_X\,\mathcal X_{\omega_D}(Y),
    &\qquad
    \nabla^{\omega_D}_{Z}Y
      &=\operatorname{pr}_1\bigl[Z,Y\bigr],\\[6pt]
    \nabla^{\omega_D}_{Z}W
      &=\operatorname{pr}_2\,
        \mathcal X_{\omega_D}^{-1}\,
        \mathcal L_Z\,\mathcal X_{\omega_D}(W),
    &\qquad
    \nabla^{\omega_D}_{X}W
      &=\operatorname{pr}_2\bigl[X,W\bigr].
  \end{aligned}
\end{equation}
Since $\nabla^{\omega_D}$ preserves $T\mathcal F_1$ and $T\mathcal F_2$, its
restrictions to vector fields tangent to the leaves,
\[
  \nabla^{\mathcal F_1}_XY:=\nabla^{\omega_D}_XY,
  \qquad
  \nabla^{\mathcal F_2}_ZW:=\nabla^{\omega_D}_ZW,
\]
define connections on $T\mathcal F_1$ and $T\mathcal F_2$, respectively.

\medskip

\emph{Step 3.}
The bi-Lagrangian data on $\mathcal U$ suffice to produce a metric on $M$
itself. For $X\in\Gamma(TM)$, define its horizontal and vertical lifts to
$M\times M$ by
\[
  X^h=(X\circ p_1,0)\in\Gamma(T\mathcal F_2),
  \qquad
  X^v=(0,X\circ p_2)\in\Gamma(T\mathcal F_1).
\]
In local coordinates, for $X=X^i(x)\,\partial_{x^i}$, this reads
\[
  X^h=X^i(x)\,\partial_{x^i},
  \qquad
  X^v=X^i(y)\,\partial_{y^i}.
\]
Define
\[
  g(X,Y):=
  \left.G\bigl(X^{v},Y^{h}\bigr)\right|_{\Delta_M}.
\]

For $X=\partial_{x^i}$ and $Y=\partial_{x^j}$, since
$K|_{T\mathcal F_1}=-\operatorname{Id}$, we have
\[
  G\bigl(X^{v},Y^{h}\bigr)
  =
  \omega_D\bigl(KX^{v},Y^{h}\bigr)
  =
  \omega_D\bigl(-\partial_{y^i},\partial_{x^j}\bigr)
  =
  -\frac{\partial^2D}{\partial x^j\,\partial y^i}(x,y).
\]
Evaluating along the diagonal and using the equality of mixed second partial
derivatives of the smooth function $D$, we obtain
\[
  \left.
  \frac{\partial^2D}{\partial x^j\,\partial y^i}(x,y)
  \right|_{y=x}
  =
  \left.
  \frac{\partial^2D}{\partial x^i\,\partial y^j}(x,y)
  \right|_{y=x}.
\]
Hence
\[
  g_{ij}(x)
  =
  -\left.
  \frac{\partial^2D}{\partial x^i\,\partial y^j}(x,y)
  \right|_{y=x}.
\]
By the defining properties of a divergence, the matrix
$\bigl(g_{ij}(x)\bigr)$ is symmetric and positive definite. Therefore $g$ is a
Riemannian metric on $M$.

\medskip

\emph{Step 4.}
We now translate the affine geometry of $M$ into that of $\mathcal U$. For
$X,Y,Z\in\Gamma(TM)$, their horizontal and vertical lifts satisfy
\begin{equation}\label{eq:lift_brackets}
  [X^h,Y^h]=[X,Y]^h,
  \qquad
  [X^v,Y^v]=[X,Y]^v,
  \qquad
  [X^h,Y^v]=0.
\end{equation}

By~\eqref{eq:hess_D} and~\eqref{eq:lift_brackets}, the mixed terms of the Hess
connection vanish on lifted vector fields:
\[
  \nabla^{\omega_D}_{X^h}Y^v
  =\operatorname{pr}_1[X^h,Y^v]=0,
  \qquad
  \nabla^{\omega_D}_{X^v}Z^h
  =\operatorname{pr}_2[X^v,Z^h]=0.
\]
Since $\nabla^{\omega_D}$ is the Levi-Civita connection of $G$, it satisfies
$\nabla^{\omega_D}G=0$. Combining this with the previous vanishing
statements, we obtain
\begin{equation}\label{eq:parallel_G}
  (X^v+X^h)\cdot G(Y^v,Z^h)
  =
  G\bigl(\nabla^{\mathcal F_1}_{X^v}Y^v,Z^h\bigr)
  +G\bigl(Y^v,\nabla^{\mathcal F_2}_{X^h}Z^h\bigr).
\end{equation}
This suggests defining two connections $\nabla$ and $\nabla^*$ on $M$ by
\begin{align}
  g(\nabla_XY,Z)
    &:=G\bigl(\nabla^{\mathcal F_1}_{X^v}Y^v,Z^h\bigr)
       \big|_{\Delta_M},
    \label{eq:nabla_bott}\\
  g(\nabla^*_XY,Z)
    &:=G\bigl(Z^v,\nabla^{\mathcal F_2}_{X^h}Y^h\bigr)
       \big|_{\Delta_M}.
    \label{eq:nablastar_bott}
\end{align}

\smallskip
It remains to check that~\eqref{eq:nabla_bott}
and~\eqref{eq:nablastar_bott} define connections. Let $f\in C^\infty(M)$. A
direct computation gives
\[
  (fX)^h=(f\circ p_1)\,X^h,
  \qquad
  (fX)^v=(f\circ p_2)\,X^v,
\]
and
\[
  X^h(f\circ p_1)=(Xf)\circ p_1,
  \qquad
  X^v(f\circ p_2)=(Xf)\circ p_2.
\]
Moreover, both $f\circ p_1$ and $f\circ p_2$ restrict to $f$ on $\Delta_M$.

We verify the connection axioms for $\nabla$; the argument for $\nabla^*$ is
analogous, using the $C^\infty(\mathcal U)$-linearity and the Leibniz rule of
$\nabla^{\mathcal F_2}$, together with $(f\circ p_1)|_{\Delta_M}=f$ and
$X^h(f\circ p_1)=(Xf)\circ p_1$. Additivity in $X$ and $Y$ follows
immediately from the additivity of $\nabla^{\mathcal F_1}$ and of the lifts.

\begin{enumerate}
  \item[(a)] \emph{$C^\infty(M)$-linearity in $X$.}
  Since $(fX)^v=(f\circ p_2)X^v$ and $\nabla^{\mathcal F_1}$ is
  $C^\infty(\mathcal U)$-linear in its first argument,
  \begin{align*}
    g(\nabla_{fX}Y,Z)
    &=G\bigl(\nabla^{\mathcal F_1}_{(f\circ p_2)X^v}Y^v,Z^h\bigr)
      \big|_{\Delta_M}\\
    &=(f\circ p_2)\,
      G\bigl(\nabla^{\mathcal F_1}_{X^v}Y^v,Z^h\bigr)
      \big|_{\Delta_M}\\
    &=f\,g(\nabla_XY,Z).
  \end{align*}
  Hence $\nabla_{fX}Y=f\,\nabla_XY$.

  \item[(b)] \emph{Leibniz rule in $Y$.}
  Since $(fY)^v=(f\circ p_2)Y^v$ and $\nabla^{\mathcal F_1}$ satisfies the
  Leibniz rule in its second argument,
  \begin{align*}
    g(\nabla_X(fY),Z)
    &=G\bigl(
        \nabla^{\mathcal F_1}_{X^v}\bigl((f\circ p_2)Y^v\bigr),
        Z^h
      \bigr)\Big|_{\Delta_M}\\
    &=G\bigl(
        X^v(f\circ p_2)\,Y^v
        +(f\circ p_2)\,\nabla^{\mathcal F_1}_{X^v}Y^v,
        Z^h
      \bigr)\Big|_{\Delta_M}\\
    &=(Xf)\,g(Y,Z)+f\,g(\nabla_XY,Z),
  \end{align*}
  where we used $X^v(f\circ p_2)=(Xf)\circ p_2$ and
  $(f\circ p_2)|_{\Delta_M}=f$. Hence
  \[
    \nabla_X(fY)=(Xf)\,Y+f\,\nabla_XY.
  \]
\end{enumerate}

\medskip

\emph{Step 5.}
We now relate the two connections through $g$. Along the diagonal, in product
notation,
\[
  (X^h+X^v)_{(x,x)}=(X_x,X_x)=d\iota_x(X_x),
  \qquad
  \iota(x)=(x,x).
\]
Thus $X^h+X^v$ is tangent to $\Delta_M$ and corresponds to $X$ under the
identification $\Delta_M\simeq M$. Since
\[
  G(Y^v,Z^h)\big|_{\Delta_M}=g(Y,Z),
\]
identity~\eqref{eq:parallel_G}, together with the definitions
\eqref{eq:nabla_bott} and~\eqref{eq:nablastar_bott}, yields
\[
  X\bigl(g(Y,Z)\bigr)
  =
  g(\nabla_XY,Z)+g(Y,\nabla^*_XZ),
  \qquad
  X,Y,Z\in\Gamma(TM).
\]
Hence $\nabla$ and $\nabla^*$ are dual with respect to $g$.

\medskip

\emph{Step 6.}
It remains to show that $\nabla$ and $\nabla^*$ are torsion-free, so that
$(g,\nabla,\nabla^*)$ is indeed a statistical structure. Since the Hess
connection $\nabla^{\omega_D}$ is torsion-free and preserves both
$T\mathcal F_1$ and $T\mathcal F_2$, its restrictions to the foliations are
torsion-free as well:
\begin{align}
  \nabla^{\mathcal F_1}_{X^v}Y^v
  -\nabla^{\mathcal F_1}_{Y^v}X^v
  &=[X^v,Y^v],
  \label{eq:hess_torsion}\\
  \nabla^{\mathcal F_2}_{Z^h}W^h
  -\nabla^{\mathcal F_2}_{W^h}Z^h
  &=[Z^h,W^h].
  \label{eq:hess_torsion2}
\end{align}

We transfer these identities to $M$ by means of the lifts. Using
\eqref{eq:hess_torsion} and~\eqref{eq:lift_brackets}, we obtain
\begin{align*}
  g(\nabla_XY-\nabla_YX,Z)
  &=G\bigl(
      \nabla^{\mathcal F_1}_{X^v}Y^v
      -\nabla^{\mathcal F_1}_{Y^v}X^v,
      Z^h
    \bigr)\Big|_{\Delta_M}\\
  &=G\bigl([X^v,Y^v],Z^h\bigr)\big|_{\Delta_M}\\
  &=G\bigl([X,Y]^v,Z^h\bigr)\big|_{\Delta_M}\\
  &=g\bigl([X,Y],Z\bigr).
\end{align*}
Since $g$ is nondegenerate, it follows that
\[
  \nabla_XY-\nabla_YX=[X,Y],
\]
so $\nabla$ is torsion-free.

Similarly, using \eqref{eq:hess_torsion2} and~\eqref{eq:lift_brackets}, we get
\begin{align*}
  g(Y,\nabla^*_XZ-\nabla^*_ZX)
  &=G\bigl(
      Y^v,
      \nabla^{\mathcal F_2}_{X^h}Z^h
      -\nabla^{\mathcal F_2}_{Z^h}X^h
    \bigr)\Big|_{\Delta_M}\\
  &=G\bigl(Y^v,[X^h,Z^h]\bigr)\big|_{\Delta_M}\\
  &=G\bigl(Y^v,[X,Z]^h\bigr)\big|_{\Delta_M}\\
  &=g\bigl(Y,[X,Z]\bigr).
\end{align*}
Hence
\[
  \nabla^*_XZ-\nabla^*_ZX=[X,Z],
\]
and $\nabla^*$ is torsion-free as well.

Combining this with the duality established in Step~5, we conclude that
$(g,\nabla,\nabla^*)$ is a statistical structure on $M$. This completes the
proof.
\end{proof}

\begin{remark}\label{rem:kappa}
Via the splitting $T\mathcal U=T\mathcal F_1\oplus T\mathcal F_2$, we identify
\[
  (T\mathcal U/T\mathcal F_1)^*\simeq T^*\mathcal F_2,
  \qquad
  (T\mathcal U/T\mathcal F_2)^*\simeq T^*\mathcal F_1 .
\]
Restricting $\mathcal X_{\omega_D}\colon T\mathcal U\to T^*\mathcal U$ to each Lagrangian leaf yields canonical isomorphisms
\[
  \kappa_1\colon T\mathcal F_1\xrightarrow{\ \sim\ }T^*\mathcal F_2,
  \qquad
  \kappa_2\colon T\mathcal F_2\xrightarrow{\ \sim\ }T^*\mathcal F_1 .
\]
\end{remark}

\begin{remark}\label{rem:iso}
Let $\{i,j\}=\{1,2\}$ and $\operatorname{pr}_i=\operatorname{pr}_{T\mathcal F_i}$, so that $T\mathcal F_j$ is complementary to $T\mathcal F_i$. For $X,Y\in\Gamma(T\mathcal F_i)$, the Bott connection of $\mathcal F_i$ is given by
\[
  \nabla^{\mathcal F_i}_XY
  =\operatorname{pr}_i\,\mathcal X_{\omega_D}^{-1}\bigl(\iota_X\,d(\iota_Y\omega_D)\bigr)
  =\kappa_i^{-1}\bigl(\mathcal L_X(\kappa_i Y)\bigr).
\]
Moreover, for $Z\in\Gamma(T\mathcal F_j)$,
\begin{equation}\label{eq:bott}
  \omega_D\bigl(\nabla^{\mathcal F_i}_XY,Z\bigr)
  =\mathcal L_X\bigl(\omega_D(Y,Z)\bigr)
  +\omega_D\bigl(\operatorname{pr}_j[X,Z],Y\bigr),
\end{equation}
which is precisely the defining identity of the Bott connection of $\mathcal F_i$.
\end{remark}

\subsection{Projection formulas and comparison with Eguchi's construction}

The identities of Remark~\ref{rem:iso} express the whole statistical structure in terms of $\omega_D$.

\begin{corollary}\label{cor:encoded}
The statistical structure $(g,\nabla,\nabla^*)$ on $M$ is entirely encoded by $\omega_D$: for all $X,Y,Z\in\Gamma(TM)$,
\[
  g(Y,Z)=\omega_D\bigl(Y^h,Z^v\bigr)\big|_{\Delta_M},
\]
and
\begin{align}
  g(\nabla_XY,Z)   &=\mathcal L_{X^h}\bigl(\omega_D(Y^h,Z^v)\bigr)\big|_{\Delta_M},
    \label{eq:3.10}\\
  g(Y,\nabla^*_XZ) &=\mathcal L_{X^v}\bigl(\omega_D(Y^h,Z^v)\bigr)\big|_{\Delta_M}.
    \label{eq:3.11}
\end{align}
Here the Lie derivatives are taken on $\mathcal U$ and the result is restricted to $\Delta_M$.
\end{corollary}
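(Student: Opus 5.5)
The plan is to reduce everything to the Bott identity~\eqref{eq:bott} of Remark~\ref{rem:iso}, applied to lifted vector fields, and then to use the bracket relations~\eqref{eq:lift_brackets} to kill the correction term. Since the mixed brackets $[X^h,Z^v]$ vanish, the correction term $\omega_D(\operatorname{pr}_j[X,Z],Y)$ disappears. Each Bott derivative then becomes a plain Lie derivative of a function $\omega_D(\cdot^h,\cdot^v)$, which is finally restricted to $\Delta_M$.

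\emph{Metric.} By Step~3, $g(Y,Z)=G(Y^v,Z^h)|_{\Delta_M}=\omega_D(KY^v,Z^h)|_{\Delta_M}$. Using $K|_{T\mathcal F_1}=-\operatorname{Id}$ and skew-symmetry of $\omega_D$, this equals $\omega_D(Z^h,Y^v)|_{\Delta_M}$. In coordinates this is
\[
  \omega_D(Z^h,Y^v)=-Z^k(x)Y^j(y)\,\partial_{x^k}\partial_{y^j}D .
\]
On the diagonal this expression is symmetric in $(Y,Z)$ by equality of mixed partials, exactly as in Step~3. This gives the first formula $g(Y,Z)=\omega_D(Y^h,Z^v)|_{\Delta_M}$.

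\emph{Connections.} For $\nabla$, start from~\eqref{eq:nabla_bott}, rewritten as
\[
  g(\nabla_XY,Z)=-\omega_D\bigl(\nabla^{\mathcal F_1}_{X^v}Y^v,Z^h\bigr)\big|_{\Delta_M}.
\]
Apply~\eqref{eq:bott} with $i=1$, $j=2$. Since $[X^v,Z^h]=0$, we get
$\omega_D(\nabla^{\mathcal F_1}_{X^v}Y^v,Z^h)=\mathcal L_{X^v}\bigl(\omega_D(Y^v,Z^h)\bigr)$.
Symmetrically, \eqref{eq:nablastar_bott} with $i=2$ gives
$\omega_D(\nabla^{\mathcal F_2}_{X^h}Z^h,Y^v)=\mathcal L_{X^h}\bigl(\omega_D(Z^h,Y^v)\bigr)$.
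Thus each side is a single directional derivative of the one function
\[
  \phi_{Y,Z}:=\omega_D(Y^h,Z^v)\quad\text{or}\quad \omega_D(Z^h,Y^v)
\]
on $\mathcal U$, taken along $X^h$ or $X^v$ and then restricted to $\Delta_M$. As a consistency check, the duality of Step~5 must come out as
\[
  \mathcal L_{X^h}\phi+\mathcal L_{X^v}\phi=(X^h+X^v)\phi=X\bigl(g(Y,Z)\bigr)\quad\text{on }\Delta_M,
\]
since $X^h+X^v$ is tangent to $\Delta_M$ there. This is exactly the splitting of $X(g(Y,Z))$ into~\eqref{eq:3.10} and~\eqref{eq:3.11}.

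\emph{Main obstacle.} The delicate step is bookkeeping: deciding which lift differentiates which slot. Off the diagonal, $\omega_D(Y^h,Z^v)\neq\omega_D(Z^h,Y^v)$, so the symmetry used for the metric cannot be carried under the derivative. I would therefore verify~\eqref{eq:3.10} and~\eqref{eq:3.11} in product coordinates. There,
\[
  \mathcal L_{\partial_{x^i}}\,\omega_D(\partial_{x^j},\partial_{y^k})=-\partial_i\partial_j\partial'_kD ,
\]
which is Eguchi's $\Gamma^D_{ij,k}$, and $\mathcal L_{\partial_{y^i}}$ produces $-\partial_k\partial'_i\partial'_jD=\Gamma^{*D}_{ij,k}$. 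I would match these against the Bott-derived expressions above, keeping track of the sign from $K$ and the orientation of $\omega_D$.

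If the Bott computation with $\nabla$ defined via $\mathcal F_1$ (vertical lifts) produces the $X^v$-derivative instead of the $X^h$-derivative, the discrepancy is only a relabelling of which connection is called $\nabla$. In that case I would use the duality identity above to pass from one form to the other. The whole structure is then still expressed through $\omega_D$ alone, which is the content of the corollary.
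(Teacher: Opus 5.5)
Your ingredients are the paper's: the Bott identity \eqref{eq:bott}, the vanishing of $[X^h,Z^v]$, and $G(V,W)=\omega_D(W,V)$ for $V\in T\mathcal F_1$, $W\in T\mathcal F_2$. Your metric step is the paper's Step~1.

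\textbf{The gap is in the connection part.} Starting literally from \eqref{eq:nabla_bott} and \eqref{eq:nablastar_bott}, your two computations are correct and give
\[
  g(\nabla_XY,Z)=\mathcal L_{X^v}\bigl(\omega_D(Z^h,Y^v)\bigr)\big|_{\Delta_M},
  \qquad
  g(Y,\nabla^*_XZ)=\mathcal L_{X^h}\bigl(\omega_D(Z^h,Y^v)\bigr)\big|_{\Delta_M}.
\]
These are \eqref{eq:3.11} and \eqref{eq:3.10} with $\nabla$ and $\nabla^*$ interchanged, not the claimed identities. The difference is not cosmetic:
\begin{itemize}
\item for coordinate fields, the first gives $-\partial_k\partial'_i\partial'_jD|_{y=x}=\Gamma^{*D}_{ij,k}$;
\item \eqref{eq:3.10} gives $-\partial_i\partial_j\partial'_kD|_{y=x}=\Gamma^{D}_{ij,k}$.
\end{itemize}
These differ by the skewness tensor. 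For the one-dimensional Bregman divergence $D=\psi(x)-\psi(y)-\psi'(y)(x-y)$ you get $\psi'''$ versus $0$. Your own coordinate check already shows this, had you actually carried out the matching.

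The remedy you propose does not work. The duality identity only splits $(X^h+X^v)\phi|_{\Delta_M}$ into $X^h\phi+X^v\phi$ for one and the same function $\phi$. It can never turn $\mathcal L_{X^v}\omega_D(Z^h,Y^v)$ into $\mathcal L_{X^h}\omega_D(Y^h,Z^v)$. For the same reason, the ``or'' in your definition of $\phi_{Y,Z}$ is not harmless. Falling back on ``everything is expressed through $\omega_D$'' proves a strictly weaker statement than the corollary, which fixes which derivative goes with which connection.

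\textbf{How to close it.} Your diagnosis of a relabelling is correct; the fix is to commit to it rather than invoke duality. The corollary uses the convention
\[
g(\nabla_XY,Z)=G\bigl(Z^v,\nabla^{\mathcal F_2}_{X^h}Y^h\bigr)\big|_{\Delta_M},
\qquad
g(\nabla^*_XZ,Y)=G\bigl(\nabla^{\mathcal F_1}_{X^v}Z^v,Y^h\bigr)\big|_{\Delta_M}.
\]
So do its proof in the paper and Corollaries~\ref{cor:projection} and~\ref{cor:eguchi_coincidence}. This is \eqref{eq:nabla_bott} and \eqref{eq:nablastar_bott} with the names exchanged; the paper makes the swap silently when it writes ``by definition of $\nabla$''. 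With the literal labels from the proof of Theorem~\ref{thm:eguchi}, \eqref{eq:3.10}--\eqref{eq:3.11} fail whenever the skewness tensor is nonzero.

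Once you adopt the corollary's convention, your computations become exactly the paper's proof:
\begin{itemize}
\item $g(\nabla_XY,Z)=\omega_D\bigl(\nabla^{\mathcal F_2}_{X^h}Y^h,Z^v\bigr)\big|_{\Delta_M}=\mathcal L_{X^h}\bigl(\omega_D(Y^h,Z^v)\bigr)\big|_{\Delta_M}$, by \eqref{eq:bott} with $i=2$;
\item $g(Y,\nabla^*_XZ)=-\omega_D\bigl(\nabla^{\mathcal F_1}_{X^v}Z^v,Y^h\bigr)\big|_{\Delta_M}=-\mathcal L_{X^v}\bigl(\omega_D(Z^v,Y^h)\bigr)\big|_{\Delta_M}$, which is \eqref{eq:3.11} by antisymmetry.
\end{itemize}

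A minor point, shared with the paper: the symmetry of $\omega_D(Z^h,Y^v)$ on $\Delta_M$ does not follow from equality of mixed partials alone. It follows by differentiating $\partial_{x^k}D(x:x)=0$ along the diagonal, which gives $\partial_{x^k}\partial_{y^j}D|_{y=x}=-\partial_{x^k}\partial_{x^j}D|_{y=x}$.
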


\begin{proof}
\emph{Step 1: the metric.} Since $KV=-V$ for $V\in T\mathcal F_1$ and $KW=W$ for $W\in T\mathcal F_2$,
\[
  G(V,W)=-\omega_D(V,W)=\omega_D(W,V).
\]
As $g(Y,Z)=G(Y^v,Z^h)|_{\Delta_M}$, we get $g(Y,Z)=\omega_D(Z^h,Y^v)|_{\Delta_M}$, and the symmetry of $g$ gives $g(Y,Z)=\omega_D(Y^h,Z^v)|_{\Delta_M}$.

\emph{Step 2: formula~\eqref{eq:3.10}.} By definition of $\nabla$ and Step~1,
\[
  g(\nabla_XY,Z)
  =G\bigl(Z^v,\nabla^{\mathcal F_2}_{X^h}Y^h\bigr)\big|_{\Delta_M}
  =\omega_D\bigl(\nabla^{\mathcal F_2}_{X^h}Y^h,Z^v\bigr)\big|_{\Delta_M}.
\]
Applying~\eqref{eq:bott} with $i=2$, $j=1$ and $[X^h,Z^v]=0$ (by~\eqref{eq:lift_brackets}),
\[
  \omega_D\bigl(\nabla^{\mathcal F_2}_{X^h}Y^h,Z^v\bigr)
  =\mathcal L_{X^h}\bigl(\omega_D(Y^h,Z^v)\bigr),
\]
which is~\eqref{eq:3.10}.

\emph{Step 3: formula~\eqref{eq:3.11}.} By definition of $\nabla^*$ and the symmetry of $g$,
\[
  g(Y,\nabla^*_XZ)
  =G\bigl(\nabla^{\mathcal F_1}_{X^v}Z^v,Y^h\bigr)\big|_{\Delta_M}
  =-\,\omega_D\bigl(\nabla^{\mathcal F_1}_{X^v}Z^v,Y^h\bigr)\big|_{\Delta_M}.
\]
Applying~\eqref{eq:bott} with $i=1$, $j=2$ and $[X^v,Y^h]=0$,
\[
  \omega_D\bigl(\nabla^{\mathcal F_1}_{X^v}Z^v,Y^h\bigr)
  =\mathcal L_{X^v}\bigl(\omega_D(Z^v,Y^h)\bigr).
\]
Hence, by antisymmetry of $\omega_D$,
\[
  g(Y,\nabla^*_XZ)
  =-\,\mathcal L_{X^v}\bigl(\omega_D(Z^v,Y^h)\bigr)\big|_{\Delta_M}
  =\mathcal L_{X^v}\bigl(\omega_D(Y^h,Z^v)\bigr)\big|_{\Delta_M},
\]
which is~\eqref{eq:3.11}. This completes the
proof.
\end{proof}

The statistical connections of Theorem~\ref{thm:eguchi} are determined by the Bott connections $\nabla^{\mathcal F_1}$, $\nabla^{\mathcal F_2}$ through the projections $p_1$, $p_2$.

\begin{corollary}[Projection formulas for the Bott connections]\label{cor:projection}
For all $X,Y\in\Gamma(TM)$,
\begin{align}
  \nabla^*_XY
    &=p_{2*}\bigl(\nabla^{\mathcal F_1}_{X^v}Y^v\bigr),
    \label{eq:3.6}\\
  \nabla_XY
    &=p_{1*}\bigl(\nabla^{\mathcal F_2}_{X^h}Y^h\bigr),
    \label{eq:3.7}\\
  \nabla^{\mathrm{LC}}_XY
    &=\tfrac12\,p_{2*}\bigl(\nabla^{\mathcal F_1}_{X^v}Y^v\bigr)
     +\tfrac12\,p_{1*}\bigl(\nabla^{\mathcal F_2}_{X^h}Y^h\bigr).
    \label{eq:3.8}
\end{align}
\end{corollary}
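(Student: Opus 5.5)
The plan is to prove the three identities pointwise along the diagonal. I will use the characterization of $\nabla$ and $\nabla^*$ in terms of $G$ and the Bott connections, in the convention of Corollary~\ref{cor:encoded}:
\[
g(\nabla_XY,Z)=G\bigl(Z^v,\nabla^{\mathcal F_2}_{X^h}Y^h\bigr)\big|_{\Delta_M},
\qquad
g(Y,\nabla^*_XZ)=G\bigl(\nabla^{\mathcal F_1}_{X^v}Z^v,Y^h\bigr)\big|_{\Delta_M}.
\]
This is the convention used in Steps~2 and~3 of that corollary. The roles of $\mathcal F_1$ and $\mathcal F_2$ there are interchanged with respect to \eqref{eq:nabla_bott}--\eqref{eq:nablastar_bott}, so I will state the convention explicitly and check that the duality of Step~5 of Theorem~\ref{thm:eguchi} is unaffected. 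Throughout, $p_{1*}(\,\cdot\,)$ and $p_{2*}(\,\cdot\,)$ are understood as the pushforward of the value at $(x,x)$, giving a vector at $x$. The vector fields $\nabla^{\mathcal F_2}_{X^h}Y^h$ and $\nabla^{\mathcal F_1}_{X^v}Y^v$ are not lifts in general, so this pointwise reading is the only meaningful one.

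\emph{Step 1 (tensorial identification).} At a point $(x,x)\in\Delta_M$, a vector $W\in T_{(x,x)}\mathcal F_2$ has the form $(w,0)$, and $p_{1*}W=w$. Hence $W$ coincides at that point with $\bigl(\widetilde w\bigr)^h$ for any vector field $\widetilde w$ on $M$ extending $w$. Likewise, every $V\in T_{(x,x)}\mathcal F_1$ coincides at that point with the vertical lift of an extension of $p_{2*}V$. Since $G$ is a tensor, it follows that at $(x,x)$
\[
G(Z^v,W)=G\bigl(Z^v,(p_{1*}W)^h\bigr)=g\bigl(Z,p_{1*}W\bigr),
\qquad
G(V,Y^h)=g\bigl(p_{2*}V,Y\bigr),
\]
where the second equality in each case is the definition of $g$ from Step~3 of Theorem~\ref{thm:eguchi}, together with the symmetry of $g$.

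\emph{Step 2 (the two projection formulas).} The Hess connection preserves $T\mathcal F_2$ and $T\mathcal F_1$, so $W=\nabla^{\mathcal F_2}_{X^h}Y^h$ lies in $T\mathcal F_2$ and $V=\nabla^{\mathcal F_1}_{X^v}Y^v$ lies in $T\mathcal F_1$. Applying Step~1 to these vectors gives, for every $Z$,
\[
g(\nabla_XY,Z)=g\bigl(p_{1*}\nabla^{\mathcal F_2}_{X^h}Y^h,Z\bigr),
\qquad
g(\nabla^*_XY,Z)=g\bigl(p_{2*}\nabla^{\mathcal F_1}_{X^v}Y^v,Z\bigr).
\]
Nondegeneracy of $g$ then yields \eqref{eq:3.7} and \eqref{eq:3.6}.

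\emph{Step 3 (the Levi-Civita formula).} By Theorem~\ref{thm:eguchi}, $\nabla$ and $\nabla^*$ are torsion-free and dual with respect to $g$. Averaging the duality identity with its version obtained by exchanging $\nabla$ and $\nabla^*$ (duality is symmetric, using the symmetry of $g$) shows that $\tfrac12(\nabla+\nabla^*)$ is metric. It is also torsion-free, since each summand is. By uniqueness of the Levi-Civita connection, $\nabla^{\mathrm{LC}}=\tfrac12(\nabla+\nabla^*)$, and substituting Step~2 gives \eqref{eq:3.8}.

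The main obstacle is not analytic but one of bookkeeping. One has to be careful that the pushforward is taken only along $\Delta_M$, where the tensorial argument of Step~1 is valid. One also has to keep the sign $K|_{T\mathcal F_1}=-\mathrm{Id}$ and the assignment of $\mathcal F_1$ versus $\mathcal F_2$ to $\nabla$ versus $\nabla^*$ consistent, so that the formulas match Corollary~\ref{cor:encoded} and, later, Eguchi's coordinate expressions.
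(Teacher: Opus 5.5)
Your proposal is correct and follows essentially the same route as the paper: both rest on the bridge identity $g(U,Z)=G(U^v,Z^h)|_{\Delta_M}$, the nondegeneracy of $g$, and the averaging argument for $\nabla^{\mathrm{LC}}$. Your pointwise tensorial identification along $\Delta_M$ makes precise the paper's looser claim that $\nabla^{\mathcal F_1}_{X^v}Y^v$ ``is the vertical lift'' of its projection. The convention you fix explicitly (that of Corollary~\ref{cor:encoded}) is exactly the one the paper's proof uses, even though it cites \eqref{eq:nabla_bott}--\eqref{eq:nablastar_bott}, whose roles of $\mathcal F_1$ and $\mathcal F_2$ are interchanged.
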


\begin{proof}
We use only the defining identity
\begin{equation}\label{eq:bridge}
  g(U,Z)=G\bigl(U^v,Z^h\bigr)\big|_{\Delta_M},
  \qquad U,Z\in\Gamma(TM),
\end{equation}
together with the nondegeneracy of $g$.

\emph{Step 1: formula~\eqref{eq:3.6}.} Let $U:=\nabla^*_XY$ and $W:=p_{2*}\bigl(\nabla^{\mathcal F_1}_{X^v}Y^v\bigr)$. By~\eqref{eq:nablastar_bott},
\[
  g(U,Z)=G\bigl(\nabla^{\mathcal F_1}_{X^v}Y^v,Z^h\bigr)\big|_{\Delta_M}.
\]
Since $\nabla^{\mathcal F_1}_{X^v}Y^v$ is the vertical lift of $W$,~\eqref{eq:bridge} gives $g(U,Z)=g(W,Z)$ for all $Z$, hence $U=W$.

\emph{Step 2: formula~\eqref{eq:3.7}.} Let $V:=p_{1*}\bigl(\nabla^{\mathcal F_2}_{X^h}Y^h\bigr)$. By~\eqref{eq:nabla_bott}, the antisymmetry of $\omega_D$ and $G(V,W)=-\omega_D(V,W)$ for $V\in T\mathcal F_1$, $W\in T\mathcal F_2$,
\[
  g(Z,\nabla_XY)
  =G\bigl(Z^v,\nabla^{\mathcal F_2}_{X^h}Y^h\bigr)\big|_{\Delta_M}
  =G\bigl(Z^v,V^h\bigr)\big|_{\Delta_M}
  =g(Z,V)\qquad\forall Z.
\]
By symmetry and nondegeneracy of $g$, $\nabla_XY=V$.

\emph{Step 3: formula~\eqref{eq:3.8}.} By Theorem~\ref{thm:eguchi}, $\nabla$ and $\nabla^*$ are torsion-free and $g$-dual, so their average is torsion-free and metric, hence equals $\nabla^{\mathrm{LC}}$. Conclude with~\eqref{eq:3.6} and~\eqref{eq:3.7}. This completes the
proof.
\end{proof}

By Remark~3.2, the symplectic form \(\omega_D\) induces the bundle isomorphisms
\[
\begin{aligned}
\kappa_1:T\mathcal F_1&\longrightarrow T^*\mathcal F_2,
&\kappa_1(V)(W)&=\omega_D(V,W),\\
\kappa_2:T\mathcal F_2&\longrightarrow T^*\mathcal F_1,
&\kappa_2(W)(V)&=\omega_D(W,V),
\end{aligned}
\]
for \(V\in T\mathcal F_1\) and \(W\in T\mathcal F_2\). These maps are bundle isomorphisms because \(\omega_D\) is symplectic and \(T\mathcal F_1\) and \(T\mathcal F_2\) are complementary Lagrangian distributions.

By Remark~3.3, the Bott connections are expressed in terms of these isomorphisms as
\[
\begin{aligned}
\nabla^{\mathcal F_1}_{X^v}Y^v
&=\kappa_1^{-1}\bigl(\mathcal L_{X^v}(\kappa_1Y^v)\bigr),\\
\nabla^{\mathcal F_2}_{X^h}Y^h
&=\kappa_2^{-1}\bigl(\mathcal L_{X^h}(\kappa_2Y^h)\bigr).
\end{aligned}
\]
Substituting these expressions into the projection formulas of Corollary~3.5 yields
\[
{}\nabla^*_XY
=p_{2*}\kappa_1^{-1}
\bigl(\mathcal L_{X^v}(\kappa_1Y^v)\bigr),
\]
and
\[
\nabla_XY
=p_{1*}\kappa_2^{-1}
\bigl(\mathcal L_{X^h}(\kappa_2Y^h)\bigr).
\]
Finally, since the Levi-Civita connection satisfies
\(\nabla^{LC}=\tfrac12(\nabla+\nabla^*)\), we obtain
\[
\begin{aligned}
\nabla^{LC}_XY
={}&\frac12p_{2*}\kappa_1^{-1}
\bigl(\mathcal L_{X^v}(\kappa_1Y^v)\bigr)\\
&+\frac12p_{1*}\kappa_2^{-1}
\bigl(\mathcal L_{X^h}(\kappa_2Y^h)\bigr).
\end{aligned}
\]

We now show that our construction recovers Eguchi’s construction.

\begin{corollary}\label{cor:eguchi_coincidence}
The metric $g$ and the connections $\nabla,\nabla^*$ of Theorem~\ref{thm:eguchi} coincide with Eguchi's:
\[
  (g,\nabla,\nabla^*)=(g^{D},\nabla^{D},{}^*\nabla^{D}).
\]
\end{corollary}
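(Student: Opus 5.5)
We prove Corollary~\ref{cor:eguchi_coincidence} by working entirely from Corollary~\ref{cor:encoded}. That corollary expresses $g$, $\nabla$ and $\nabla^*$ through the single function $\Phi_{Y,Z}:=\omega_D(Y^h,Z^v)$ on $\mathcal U$ and its Lie derivatives along lifts. The plan is to compute $\Phi_{Y,Z}$ in product coordinates $(x,y)$, differentiate it once along $X^h$ or $X^v$, and restrict to $\Delta_M$. The results should match the defining identities \eqref{eq:metric}, \eqref{eq:connection1} and \eqref{eq:connection2} of Eguchi's construction term by term. Nondegeneracy of $g=g^D$ then identifies the connections.

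\emph{Metric.} From $\omega_D=-\sum_{i,j}\partial_i\partial'_jD\,dx^i\wedge dy^j$ we get $\omega_D(\partial_{x^i},\partial_{y^j})=-\partial_i\partial'_jD(x,y)$. With $Y^h=Y^i(x)\partial_{x^i}$ and $Z^v=Z^j(y)\partial_{y^j}$ this gives
\[
  \Phi_{Y,Z}(x,y)=-Y^i(x)\,Z^j(y)\,\partial_i\partial'_jD(x,y).
\]
Restricting to $y=x$ yields $-D[Y\mid Z]$, which is $g^D(Y,Z)$ by \eqref{eq:metric}. Hence $g=g^D$, and this is consistent with Step~3 of Theorem~\ref{thm:eguchi}.

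\emph{Connections.} Since $X^h=X^k(x)\partial_{x^k}$ acts only on the $x$-variables,
\[
  \mathcal L_{X^h}\Phi_{Y,Z}=-X^k(x)\,\partial_{x^k}\!\bigl(Y^i(x)\,\partial_i\partial'_jD\bigr)\,Z^j(y).
\]
The right-hand side is exactly the coordinate expression of $-(X)_p(Y)_p(Z)_q D(p,q)$: the derivative falls both on the coefficient $Y^i$ and on $\partial_i D$, as in the definition of $D[XY\mid Z]$. On $\Delta_M$ it is therefore $-D[XY\mid Z]=g^D(\nabla^D_XY,Z)$. By \eqref{eq:3.10} this gives $g(\nabla_XY,Z)=g^D(\nabla^D_XY,Z)$ for all $Z$, so $\nabla=\nabla^D$.

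Symmetrically, $X^v=X^k(y)\partial_{y^k}$ differentiates only the $y$-factor, so
\[
  \mathcal L_{X^v}\Phi_{Y,Z}\big|_{\Delta_M}=-D[Y\mid XZ].
\]
By \eqref{eq:connection2}, with the roles of the arguments named appropriately, this equals $g^D(Y,{}^*\nabla^D_XZ)$. Combined with \eqref{eq:3.11} it gives ${}^*\nabla^D=\nabla^*$. As a cross-check, duality of both pairs with respect to the same metric forces $\nabla^*={}^*\nabla^D$ once $\nabla=\nabla^D$ is known.

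\emph{Main obstacle: signs and ordering.} The delicate point is bookkeeping of signs and of which lift carries which slot. The paper uses $K|_{T\mathcal F_1}=-\mathrm{Id}$ and $G(V,W)=-\omega_D(V,W)$, and \eqref{eq:nabla_bott} and \eqref{eq:3.7} are phrased with different lifts. I would therefore rely only on the final formulas of Corollary~\ref{cor:encoded}, whose signs I would recheck directly against $\omega_D(\partial_{x^i},\partial_{y^j})=-\partial_i\partial'_jD$. I would also verify that the Lie derivative of a function along a lift is just the coordinate directional derivative, so the coefficients $Y^i(x)$, respectively $Z^j(y)$, are differentiated correctly. This is what reproduces Eguchi's convention that $D[XY\mid Z]$ includes the term $(XY^i)\,\partial_i$.
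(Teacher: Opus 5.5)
Your route is the paper's own: evaluate \eqref{eq:3.10}--\eqref{eq:3.11} of Corollary~\ref{cor:encoded} in product coordinates and restrict to $\Delta_M$. The paper does this with coordinate fields and reads off $\Gamma_{ij,k}$ and $\Gamma^*_{ik,j}$. You do the same computation for arbitrary $X,Y,Z$ through $D[\cdot\mid\cdot]$. Granting \eqref{eq:3.10}--\eqref{eq:3.11}, your computations are correct, including the derivatives that fall on $Y^i(x)$ and $Z^j(y)$.

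\textbf{The gap.} It is the mismatch you noticed and then set aside. Corollary~\ref{cor:encoded} is exactly where the labels $\nabla,\nabla^*$ get exchanged, so deferring to it cannot decide which connection of Theorem~\ref{thm:eguchi} is which. Its Step~2 cites ``the definition of $\nabla$'' but writes the right-hand side of \eqref{eq:nablastar_bott}.

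Compute instead from \eqref{eq:nabla_bott} itself. Use $G(V,W)=\omega_D(W,V)$ for $V\in T\mathcal F_1$, $W\in T\mathcal F_2$, and \eqref{eq:bott} with $[X^v,Z^h]=0$. This gives
\[
  g(\nabla_XY,Z)
  =G\bigl(\nabla^{\mathcal F_1}_{X^v}Y^v,Z^h\bigr)\big|_{\Delta_M}
  =\mathcal L_{X^v}\bigl(\omega_D(Z^h,Y^v)\bigr)\big|_{\Delta_M}
  =-D[Z\mid XY]
  =g^D\bigl({}^*\nabla^D_XY,Z\bigr).
\]
Likewise \eqref{eq:nablastar_bott} gives $g(\nabla^*_XY,Z)=\mathcal L_{X^h}\bigl(\omega_D(Y^h,Z^v)\bigr)\big|_{\Delta_M}=-D[XY\mid Z]$.

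This outcome is forced. $X^v$ differentiates in $y$, so the $\mathcal F_1$-side connection carries two primed derivatives, which is the signature of ${}^*\nabla^D$. Taken literally, Theorem~\ref{thm:eguchi} therefore produces $(g^D,{}^*\nabla^D,\nabla^D)$. Your planned sign check against $\omega_D(\partial_{x^i},\partial_{y^j})$ cannot detect this, because the issue is which lift does the differentiating, not a sign. The paper's proof inherits the same defect.

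\textbf{How to close it.} One option is to interchange the definitions \eqref{eq:nabla_bott} and \eqref{eq:nablastar_bott}. Duality and torsion-freeness are symmetric in the pair, so the rest of Theorem~\ref{thm:eguchi} is unaffected, and your computation then proves the statement as written. The other option is to keep the definitions, derive the two formulas directly from them as above, and state the coincidence with $\nabla$ and $\nabla^*$ exchanged.
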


\begin{proof}
Let $(x^i,y^{\bar\jmath})$ be local product coordinates on $\mathcal U$ and
\[
  \omega_{i\bar\jmath}:=-\frac{\partial^2D}{\partial x^i\partial y^{\bar\jmath}}.
\]
Then $\omega_D=\omega_{i\bar\jmath}\,dx^i\wedge dy^{\bar\jmath}$, and since $G(V,W)=\omega_D(W,V)$ for $V\in T\mathcal F_1$, $W\in T\mathcal F_2$, we have $G(\partial_i,\partial_{\bar\jmath})=\omega_{i\bar\jmath}$.

\emph{Step 1: the metric.} Restricting to $\Delta_M$,
\[
  g_{ij}=-\left.\frac{\partial^2D}{\partial x^i\partial y^{\bar\jmath}}\right|_{y=x},
\]
so $g=g^D$.

 \emph{Step 2: the connection $\nabla$.} Let $X=\partial_i$, $Y=\partial_j$, and $Z=\partial_k$. Since $\omega_D(Y^h,Z^v)=\omega_{j\bar k}$,~\eqref{eq:3.10} gives
\[
  g(\nabla_{\partial_i}\partial_j,\partial_k)
  =\left.\frac{\partial}{\partial x^i}
  \omega_D(\partial_j^h,\partial_k^v)\right|_{y=x}
  =\left.\frac{\partial\omega_{j\bar k}}{\partial x^i}\right|_{y=x}
  =-\left.\frac{\partial^3D}{\partial x^i\partial x^j\partial y^k}\right|_{y=x}.
\]
Consequently,
\[
  \Gamma_{ij,k}
  =-\left.\partial_i\partial_j\partial'_kD\right|_{y=x}
  =\Gamma^D_{ij,k},
\]
where $\partial'_k=\frac{\partial}{\partial y^k}$. Hence $\nabla=\nabla^D$.

\emph{Step 3: the dual connection $\nabla^*$.} With the same coordinate vector fields,~\eqref{eq:3.11} gives
\[
  g(\nabla^*_{\partial_i}\partial_k,\partial_j)
  =\left.\frac{\partial}{\partial y^i}
  \omega_D(\partial_j^h,\partial_k^v)\right|_{y=x}
  =\left.\frac{\partial\omega_{j\bar k}}{\partial y^i}\right|_{y=x}
  =-\left.\frac{\partial^3D}{\partial y^i\partial x^j\partial y^k}\right|_{y=x}.
\]
Consequently,
\[
  \Gamma^*_{ik,j}
  =-\left.\partial_j\partial'_i\partial'_kD\right|_{y=x}
  =\Gamma^{*D}_{ik,j},
\]
where $\partial'_i=\frac{\partial}{\partial y^i}$. Hence $\nabla^*={}^*\nabla^D$.
\end{proof}

\begin{figure}[htbp]
\centering
\begin{tikzpicture}[scale=2.5]
\draw[gray!20, very thin] (0,0) grid[step=0.5] (3,3);
\draw[->, thick] (-0.2,0) -- (3.3,0) node[right] {$M$};
\draw[->, thick] (0,-0.2) -- (0,3.3) node[above] {$M$};
\node at (1.5,3.45) {$M\times M$};

\fill[blue!15,opacity=.75]
(-0.05,0.75)
.. controls (0.4,1.8) and (0.9,2.2) ..
(1.4,2.55)
.. controls (2.0,2.95) and (2.6,3.15) ..
(3.05,3.2)
--
(3.2,1.5)
.. controls (2.7,1.2) and (2.0,0.9) ..
(1.3,0.6)
.. controls (0.7,0.35) and (0.2,0.15) ..
(0.05,-0.05)
-- cycle;
\node[blue!70!black] at (2.75,1.85) {$\mathcal{U}$};

\foreach \c in {-0.8,-0.4,0,0.4,0.8,1.2,1.6,2.0}
{
  \draw[green!60!black, thick]
    (-0.2,{0.3*(-0.2)+\c}) -- (3.2,{0.3*(3.2)+\c});
}
\foreach \c in {-3.2,-2.6,-2.0,-1.4,-0.8,-0.2,0.4,1.0}
{
  \draw[purple!60!black, thick]
    ({(-0.2-\c)/1.7},-0.2) -- ({(3.2-\c)/1.7},3.2);
}

\draw[red, very thick] (0,0) -- (3.1,3.1);
\node[above right] at (2.1,2.1) {$\Delta_M=\{(x,x)\mid x\in M\}$};
\fill[red] (1,1) circle (1pt);
\node[below right] at (1,1) {$(a,a)$};
\fill[red] (2,2) circle (1pt);
\node[above left] at (2,2) {$(b,b)$};

\fill[black] (2.4,1.3) circle (1pt);
\node[right] at (2.4,1.3) {$(x,y)$};

\draw[->,blue,thick] (2.4,1.3) -- (2.4,0);
\node[blue] at (2.62,0.55) {$p_1$};
\fill[blue] (2.4,0) circle (1pt);
\node[below] at (2.4,0) {$x$};

\draw[->,blue,thick] (2.4,1.3) -- (0,1.3);
\node[blue] at (1.0,1.48) {$p_2$};
\fill[blue] (0,1.3) circle (1pt);
\node[left] at (0,1.3) {$y$};

\node[align=left] at (3.9,2.2)
  {$\displaystyle p_1:\mathcal{U}\to M$\\[2mm]$\displaystyle p_1(x,y)=x$};
\node[align=left] at (3.9,1.2)
  {$\displaystyle p_2:\mathcal{U}\to M$\\[2mm]$\displaystyle p_2(x,y)=y$};
\node[green!60!black] at (2.6,2.9) {\small $\mathcal{F}_1$};
\node[purple!60!black] at (2.6,2.55) {\small $\mathcal{F}_2$};
\end{tikzpicture}

\caption{A neighborhood $\mathcal{U}$ of the diagonal $\Delta_M\subset M\times M$ with the two transverse foliations $\mathcal{F}_1$ and $\mathcal{F}_2$.}
\label{fig:bi-lagrangian-neighborhood}
\end{figure}

\subsection{Statistical structures and bi-Lagrangian structures}

To conclude this section, we present a general result relating statistical structures to bi-Lagrangian (equivalently, para-Kähler) structures.

\begin{theorem}\label{thm:equivalence_statistical_bilagrangian}
Every statistical structure $(g,\nabla,\nabla^*)$ on $M$ is induced, through a divergence $D$, by a bi-Lagrangian structure $(\omega_D,\mathcal F_1,\mathcal F_2)$ on a neighborhood of $\Delta_M\subset M\times M$. Conversely, every bi-Lagrangian structure $(\omega,\mathcal F_1,\mathcal F_2)$ on a neighborhood $\mathcal U$ of $\Delta_M$ for which $\Delta_M$ is Lagrangian and transverse to $\mathcal F_1$ and $\mathcal F_2$ induces a statistical structure $(g,\nabla,\nabla^*)$ on $M$ in the sense of Kurose.
\end{theorem}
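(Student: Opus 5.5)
For the first direction, I will combine Matumoto's theorem~\cite{Matumoto1993} with Theorem~\ref{thm:eguchi} and Corollary~\ref{cor:eguchi_coincidence}. Given a statistical structure $(g,\nabla,\nabla^*)$ on $M$, Matumoto provides a divergence $D$ with $(g^D,\nabla^D,{}^*\nabla^D)=(g,\nabla,\nabla^*)$. Step~1 of the proof of Theorem~\ref{thm:eguchi} then produces the bi-Lagrangian structure $(\omega_D,\mathcal F_1,\mathcal F_2)$ on a neighborhood $\mathcal U$ of $\Delta_M$. Corollary~\ref{cor:eguchi_coincidence} shows that the structure obtained from this bi-Lagrangian structure through lifts, restriction to $\Delta_M$ and the Bott connections is exactly $(g^D,\nabla^D,{}^*\nabla^D)$, hence the given one. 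This direction is formal once Matumoto's result is invoked.

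For the converse, let $(\omega,\mathcal F_1,\mathcal F_2)$ be bi-Lagrangian on $\mathcal U$, with $\Delta_M$ Lagrangian and transverse to both foliations. I will use the para-K\"ahler data $(G,K)$ of Step~2 of Theorem~\ref{thm:eguchi} and the Hess connection $\nabla^\omega=\nabla^G$, which is torsion-free with $\nabla^G G=0$. The natural candidate is $g:=\iota^*G$, with $\iota\colon M\to\Delta_M$, $x\mapsto(x,x)$. For the connection, take $X,Y$ tangent to $\Delta_M$ and decompose $\nabla^G_XY=\operatorname{pr}_\Delta(\nabla^G_XY)+\operatorname{pr}_{\mathcal F_i}(\nabla^G_XY)$, using the transversal splitting $T\mathcal U|_{\Delta_M}=T\Delta_M\oplus T\mathcal F_1$ (or $\oplus\,T\mathcal F_2$). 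Set $\nabla_XY$ to be the $T\Delta_M$-component for the $\mathcal F_1$-splitting and $\nabla^*_XY$ the $T\Delta_M$-component for the $\mathcal F_2$-splitting.

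Torsion-freeness follows from that of $\nabla^G$ together with $[X,Y]\in T\Delta_M$. The Kurose condition then reduces to a Gauss-type computation. Write $\nabla^G_XY=\nabla_XY+h(X,Y)$ with $h(X,Y)\in T\mathcal F_1$. Then
\[
(\nabla_Xg)(Y,Z)=-G(h(X,Y),Z)-G(Y,h(X,Z)).
\]
This must be shown totally symmetric in $X,Y,Z$. Symmetry in $X,Y$ is clear from torsion-freeness, since $h$ is symmetric. It remains to show that $G(h(X,Y),Z)$ is symmetric in $Y,Z$. Here I will use that $\Delta_M$ is $\omega$-Lagrangian. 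We have $\omega(Y,Z)=0$ along $\Delta_M$, so $\nabla^\omega\omega=0$ gives $\omega(h(X,Y),Z)=\omega(h(X,Z),Y)$ after discarding tangential terms. Combining this with $G(V,W)=\omega(KV,W)$, $K=-\mathrm{Id}$ on $T\mathcal F_1$, and antisymmetry of $\omega$ should yield the required symmetry. Duality of $\nabla$ and $\nabla^*$ with respect to $g$ would follow by comparing the two splittings in the same way.

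The main obstacle is the nondegeneracy of $g=\iota^*G$. Since $\mathcal F_1,\mathcal F_2$ are $G$-null and $T\Delta_M$ is a graph over $T\mathcal F_2$ (say) of a map $A\colon T\mathcal F_2\to T\mathcal F_1$, we get $g(W+AW,W'+AW')=G(AW,W')+G(W,AW')$. The Lagrangian condition makes $A$ correspond to a symmetric form, so this is a genuine, possibly indefinite, symmetric form. I expect its nondegeneracy to follow from transversality to both foliations, which forces $A$ to be invertible. If nondegeneracy proves delicate, I would state the structure as pseudo-Riemannian, exactly as Kurose's definition allows. The symmetry bookkeeping in the Gauss step, with the signs coming from $K$, also needs care.
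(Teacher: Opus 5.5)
Your proof is correct. The first direction is the paper's argument: Matumoto's divergence plus Step~1 of Theorem~\ref{thm:eguchi}. Invoking Corollary~\ref{cor:eguchi_coincidence} to make explicit that the recovered structure is the given one is a useful addition.

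\textbf{The converse: a genuinely different route.} The paper chooses coordinates adapted to both foliations with $\Delta_M=\{y=x\}$, reuses the lifts $X^h,X^v$ and \eqref{eq:lift_brackets}, and sets $g(X,Y)=G(X^v,Y^h)|_{\Delta_M}$; the Lagrangian hypothesis is what makes this symmetric. It then defines $\nabla,\nabla^*$ by pairing Bott connections against lifts.

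You instead work only along $\Delta_M$, as for an affine immersion with transversal bundles $T\mathcal F_1$ and $T\mathcal F_2$. Your metric $g=\iota^*G$ is symmetric for free, and the Lagrangian hypothesis enters through $\nabla^G\omega=0$. Your sketch closes as follows.
\begin{itemize}
\item \emph{Total symmetry.} Differentiating $\omega(Y,Z)|_{\Delta_M}=0$ gives $\omega(h(X,Y),Z)=\omega(h(X,Z),Y)$. Hence $\tau(X,Y,Z):=G(h(X,Y),Z)=-\omega(h(X,Y),Z)$ is symmetric in $(Y,Z)$, and in $(X,Y)$ by torsion-freeness. Therefore $(\nabla_Xg)(Y,Z)=2\tau(X,Y,Z)$ is totally symmetric.
\item \emph{Duality.} Write $\nabla^G_XZ=\nabla^*_XZ+h^*(X,Z)$ with $h^*\in T\mathcal F_2$. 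Splitting $Y$ along $T\mathcal F_1$ and $Z$ along $T\mathcal F_2$ in the same identity gives $\omega(h(X,Y),Z)+\omega(Y,h^*(X,Z))=0$. This is minus the defect in $X\,g(Y,Z)=g(\nabla_XY,Z)+g(Y,\nabla^*_XZ)$.
\item \emph{Agreement with Eguchi.} The mixed Hess terms vanish on lifts, so $\nabla^G_{X^h+X^v}(Y^h+Y^v)=\nabla^{\mathcal F_2}_{X^h}Y^h+\nabla^{\mathcal F_1}_{X^v}Y^v$. Hence on $\omega_D$ your tangential parts are exactly $p_{1*}(\nabla^{\mathcal F_2}_{X^h}Y^h)$ and $p_{2*}(\nabla^{\mathcal F_1}_{X^v}Y^v)$. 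These are the connections of Corollary~\ref{cor:projection}, i.e.\ Eguchi's.
\item \emph{A labelling remark.} Your labelling matches those corollaries. As printed, \eqref{eq:nabla_bott} attaches the $\mathcal F_1$-Bott term to $\nabla$, which on $\omega_D$ actually produces ${}^*\nabla^D$.
\end{itemize}

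\textbf{What each route buys.} Yours is intrinsic and needs no lifts. For a general bi-Lagrangian $\mathcal U$, the paper's lifts are not the product lifts of $M\times M$ and exist only after adapting coordinates to both foliations. The paper's route, in exchange, exhibits the Bott connections explicitly and reuses the proof of Theorem~\ref{thm:eguchi} verbatim.

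\textbf{Three small repairs.}
\begin{itemize}
\item The sign is $(\nabla_Xg)(Y,Z)=+G(h(X,Y),Z)+G(Y,h(X,Z))$; this is harmless.
\item $\iota^*G(X,Y)=G(X^v,Y^h)+G(Y^v,X^h)$ is twice the paper's $g$. Applied to $\omega_D$, your converse therefore returns $(2g^D,\nabla^D,{}^*\nabla^D)$. Use $\tfrac12\iota^*G$ to match the first direction.
\item Nondegeneracy cannot be dodged by appealing to Kurose: pseudo-Riemannian metrics are nondegenerate too. Your sketch does finish in one line, though. The Lagrangian condition gives $\iota^*G(W+AW,W'+AW')=-2\,\omega(AW,W')$. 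This is nondegenerate because $A\colon T\mathcal F_2\to T\mathcal F_1$ is injective (transversality to $\mathcal F_2$), hence invertible, and $\omega$ pairs $T\mathcal F_1$ with $T\mathcal F_2$ perfectly.
\end{itemize}
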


\begin{proof}
By~\cite{Matumoto1993}, every statistical manifold $(M,g,\nabla,\nabla^*)$ admits a divergence $D\colon M\times M\to\mathbb{R}$ (not uniquely) such that $g(X,Y)=-D[X\mid Y]$, $g(\nabla_XY,Z)=-D[XY\mid Z]$ and $g(Y,\nabla^*_XZ)=-D[Y\mid XZ]$ for all $X,Y,Z\in\mathfrak{X}(M)$. Following~\cite[Section~3]{Barndorff}, the $2$-form $\omega_D=\sum_{i,j}-\bigl(\partial^2D/\partial x^i\partial y^{\bar\jmath}\bigr)\,dx^i\wedge dy^{\bar\jmath}$ is symplectic on an open neighborhood $\mathcal U$ of $\Delta_M$, by nondegeneracy of the mixed Hessian. The fibers of $p_1$ and $p_2$ are Lagrangian for $\omega_D$, which gives the bi-Lagrangian structure $(\omega_D,\mathcal F_1,\mathcal F_2)$ on $\mathcal U$.

Conversely, let $(\omega,\mathcal F_1,\mathcal F_2)$ be a bi-Lagrangian structure on $\mathcal U\supset\Delta_M$ such that $\Delta_M$ is Lagrangian and transverse to $\mathcal F_1$ and $\mathcal F_2$. Then
\[
  T\mathcal U|_{\Delta_M}=T\Delta_M\oplus T\mathcal F_1=T\Delta_M\oplus T\mathcal F_2 .
\]
Transversality of the foliations and of $\Delta_M$ gives product coordinates $(x,y)$ in which $\Delta_M=\{y=x\}$, so the lifts $X^h,X^v$ and the relations~\eqref{eq:lift_brackets} are those of Step~4 of the proof of Theorem~\ref{thm:eguchi}. Step~2 of that proof uses only the bi-Lagrangian structure; it provides the para-complex structure $K$, the neutral metric $G=\omega(K\cdot,\cdot)$, the Hess connection $\nabla^\omega=\nabla^G$ and the formulas~\eqref{eq:hess_D}. In particular $G(V,W)=\omega(W,V)$ for $V\in T\mathcal F_1$ and $W\in T\mathcal F_2$.

Set $g(X,Y):=G(X^v,Y^h)|_{\Delta_M}=\omega(Y^h,X^v)|_{\Delta_M}$. The field $X^h+X^v$ is tangent to $\Delta_M$, which is Lagrangian, and $\mathcal F_1,\mathcal F_2$ are Lagrangian; hence
\begin{align*}
  0&=\omega\bigl(X^h+X^v,\,Y^h+Y^v\bigr)\big|_{\Delta_M}\\
   &=\omega(X^h,Y^v)\big|_{\Delta_M}+\omega(X^v,Y^h)\big|_{\Delta_M},
\end{align*}
that is, $g(X,Y)=g(Y,X)$. If $g(X,Y)=0$ for every $Y\in\mathfrak X(M)$, then $\omega(X^h,Y^v)=0$ along $\Delta_M$. Since $\Delta_M$ is transverse to $\mathcal F_2$, the vectors $Y^v$ span $T\mathcal F_1|_{\Delta_M}$. Hence $\omega(X^h,V)=0$ for every $V\in T\mathcal F_1$. Since $\omega$ is nondegenerate and $\mathcal F_1,\mathcal F_2$ are transverse Lagrangian foliations, we obtain $X^h=0$. As $X^h$ is the $\mathcal F_2$-component of the tangent vector $X^h+X^v\in T\Delta_M$, and $T\Delta_M\cap T\mathcal F_1=\{0\}$, it follows that $X=0$. Thus $g$ is nondegenerate, and hence a pseudo-Riemannian metric on $M$.

Define $\nabla,\nabla^*$ by~\eqref{eq:nabla_bott} and~\eqref{eq:nablastar_bott}. Step~4 of the proof of Theorem~\ref{thm:eguchi} applies verbatim, since it uses only~\eqref{eq:lift_brackets}, \eqref{eq:hess_D}, the Lagrangian property of $\mathcal F_1$ and the nondegeneracy of $g$ just proved; hence $\nabla$ and $\nabla^*$ are connections. Since $\nabla^\omega G=0$, \eqref{eq:parallel_G} holds, and the argument of Step~5 of that proof gives
\[
  X\,g(Y,Z)=g(\nabla_XY,Z)+g(Y,\nabla^*_XZ).
\]
By~\eqref{eq:hess_torsion}, \eqref{eq:hess_torsion2} and~\eqref{eq:lift_brackets}, as in Step~6 of that proof, $\nabla$ and $\nabla^*$ are torsion-free. By duality and torsion-freeness, and by the equivalence in Definition~\ref{def:stat_manifold}, $\nabla g$ is totally symmetric. Hence $(g,\nabla,\nabla^*)$ is a statistical structure on $M$ in the sense of Kurose. This completes the
proof.
\end{proof}

\section*{Acknowledgments}

The authors are grateful to Professor Jun Zhang (the University of Michigan, Ann Arbor, USA) for fruitful discussions on divergence functions and the Eguchi construction.

The authors are grateful to  Professor A.~Banyaga (Penn State University, USA) for invaluable discussions and support.



\begin{thebibliography}{10}

\bibitem{Barndorff} O. E. Barndorff-Nielsen and P. E. Jupp, Statistics, yokes and symplectic geometry, Annales de la Facult\'e des Sciences de Toulouse: Math\'ematiques, 6 (1997), no.~3, 389--427.

\bibitem{Gordejuela} F. Etayo Gordejuela and R. Santamar\'ia, The canonical connection of a bi-Lagrangian manifold, Journal of Physics A: Mathematical and General, 34 (2001), no.~5, 981--987.

\bibitem{Kaneyuki} S. Kaneyuki, On a remarkable class of homogeneous symplectic manifolds, 1991.

\bibitem{hamilton2023kunneth}
Hamilton, M.J.D., Kotschick, D.: K\"unneth geometry: symplectic manifolds and their Lagrangian foliations. Cambridge University Press, vol. 108 (2023).

\bibitem{Hess} H. Hess, Connections on symplectic manifolds and geometric quantization, in Differential Geometrical Methods in Mathematical Physics (Proc. Conf., Aix-en-Provence and Salamanca, 1979), Springer, 2006, pp.~153--166.

\bibitem{Eguchi} S. Eguchi, Second order efficiency of minimum contrast estimators in a curved exponential family, The Annals of Statistics, 1983, pp.~793--803.

\bibitem{Eguchi1992} S. Eguchi, Geometry of minimum contrast, Hiroshima Mathematical Journal, 22 (1992), no.~3, 631--647.

\bibitem{Dombrowski1962}
Dombrowski, P.: On the geometry of the tangent bundle. J. Reine Angew. Math. (1962).

\bibitem{Satoh2019}
Satoh, H.: Almost Hermitian structures on tangent bundles. arXiv preprint arXiv:1908.10824 (2019).

\bibitem{Hess2005} H. Hess, On a geometric quantization scheme generalizing those of Kostant--Souriau and Czy\.z, in \textit{Differential Geometric Methods in Mathematical Physics}, Springer, 2005, pp.~1--35.

\bibitem{Rao1945}
Rao, C.R.: Information and the accuracy attainable in the estimation of statistical parameters. Bull. Calcutta Math. Soc. 37(3), 81--91 (1945).

\bibitem{tabachnikov1993geometry}
Tabachnikov, S.: Geometry of Lagrangian and Legendrian 2-web. Differ. Geom. Appl. 3(3), 265--284 (1993).

\bibitem{guillemin2000kunneth}
Guillemin, V.: K\"unneth geometry. Seminar talk at Harvard, 4 December 2000.

\bibitem{kanai1988geodesic}
Kanai, M.: Geodesic flows of negatively curved manifolds with smooth stable and unstable foliations. Ergod. Theory Dyn. Syst. 8(2), 215--239 (1988).

\bibitem{Amari} S. Amari, Information geometry and its applications, Springer, 2016.

\bibitem{Bregman} L. M. Bregman, The relaxation method of finding the common point of convex sets and its application to the solution of problems in convex programming, USSR Computational Mathematics and Mathematical Physics, 7 (1967), no.~3, 200--217.

\bibitem{Le2006}
L\^e, H.~V.: Statistical manifolds are statistical models. \textit{J. Geom.} \textbf{84}(1--2), 83--93 (2006).

\bibitem{Alekseevsky2025}
D. Alekseevsky, G. Manno, G. Moreno, Invariant Monge--Amp\`ere equations on contactified para--K\"ahler manifolds. \emph{Ann. Global Anal. Geom.} \textbf{67} (2025), no.~4, Article~25.

\bibitem{Basseville2013} M. Basseville, Divergence measures for statistical data processing---An annotated bibliography, Signal Processing, 93(4), 621--633, 2013.

\bibitem{etayo2006geometry}
Etayo, F., Santamar\'ia, R., Tr\'ias, U.R.: The geometry of a bi-Lagrangian manifold. Differ. Geom. Appl. 24(1), 33--59 (2006).

\bibitem{Eguchi2022} S. Eguchi and O. Komori, Minimum divergence methods in statistical machine learning, Springer, 2022.

\bibitem{Bryant2001} R. Bryant, Bochner-K\"ahler metrics, Journal of the American Mathematical Society, 14(3), 623--715, 2001.

\bibitem{leichtnam2007poisson}
Leichtnam, E., Tang, X., Weinstein, A.: Poisson geometry and deformation quantization near a strictly pseudoconvex boundary. J. Eur. Math. Soc. 9(4), 681--704 (2007).

\bibitem{nomizu1992notes}
Nomizu, K., Simon, U.: Notes on conjugate connections. In: Geometry and Topology of Submanifolds IV, pp. 152--172. World Scientific (1992).

\bibitem{Arndt2012} C. Arndt, Information measures: information and its description in science and engineering, Springer Science \& Business Media, 2012.

\bibitem{Csiszar} I. Csisz\'ar, Information-type measures of difference of probability distributions and indirect observation, Studia Scientiarum Mathematicarum Hungarica, 2 (1967), 229--318.

\bibitem{Nash1956}
Nash, J.: The imbedding problem for Riemannian manifolds. \textit{Ann. Math.} \textbf{63}(1), 20--63 (1956).

\bibitem{AliSilvey} S. M. Ali and S. D. Silvey, A general class of coefficients of divergence of one distribution from another, Journal of the Royal Statistical Society: Series B (Methodological), 28 (1966), no.~1, 131--142.

\bibitem{Zhang} J. Zhang, Divergence function, duality, and convex analysis, Neural Computation, 16 (2004), no.~1, 159--195.

\bibitem{Nash1954}
Nash, J.: $C^1$ isometric imbeddings. \textit{Ann. Math.} \textbf{60}(3), 383--396 (1954).

\bibitem{CsiszarTop} I. Csisz\'ar, On topological properties of $f$-divergence, Studia Sci. Math. Hungar., 2 (1967), 330--339.

\bibitem{vaisman2015generalized}
Vaisman, I.: Generalized para-K\"ahler manifolds. Differ. Geom. Appl. 42, 84--103 (2015).

\bibitem{Chernoff} H. Chernoff, A measure of asymptotic efficiency for tests of a hypothesis based on the sum of observations, The Annals of Mathematical Statistics, 1952, pp.~493--507.

\bibitem{Taylor2019} S. Taylor, Clustering financial return distributions using the Fisher information metric, Entropy, 21(2), 110, 2019.

\bibitem{Kagan} A. M. Kagan, On the theory of Fisher's amount of information, Doklady Akademii Nauk SSSR, 151 (1963), 277--278.

\bibitem{RobertChopinRousseau} C. P. Robert, N. Chopin and J. Rousseau, Harold Jeffreys's theory of probability revisited, 2009.

\bibitem{AyTuschmann2002} N. Ay and W. Tuschmann, Dually flat manifolds and global information geometry, Open Systems \& Information Dynamics, 9 (2002), no.~2, 195--200, Springer.

\bibitem{Lauritzen1987} S. L. Lauritzen, Statistical manifolds, Differential Geometry in Statistical Inference, 10 (1987), no.~2, 163--216, Institute of Mathematical Statistics, Hayward.

\bibitem{Kurose1994} T. Kurose, On the divergences of 1-conformally flat statistical manifolds, Tohoku Mathematical Journal, Second Series, 46 (1994), no.~3, 427--433.

\bibitem{libermann1952structures}
Libermann, R.: Sur les structures presque paracomplexes. C. R. Acad. Sci. Paris 234, 2517--2519 (1952).

\bibitem{libermann1954probleme}
Libermann, P.: Sur le probl\`eme d'\'equivalence de certaines structures infinit\'esimales. Ann. Mat. Pura Appl. 36(1), 27--120 (1954).

\bibitem{flato1976crochet}
Flato, M., Lichnerowicz, A., Sternheimer, D.: Crochet de Moyal-Vey et quantification (1976).

\bibitem{nagaoka2026differential}
Hiroshi Nagaoka and Shun-ichi Amari, ``Differential geometry of smooth families of probability distributions,'' \emph{Information Geometry}, pp. 1--29, 2026, Springer.

\bibitem{bayen1977quantum}
Bayen, F., Flato, M., Fronsdal, C., Lichnerowicz, A., Sternheimer, D.: Quantum mechanics as a deformation of classical mechanics. Lett. Math. Phys. 1(6), 521--530 (1977).

\bibitem{Delanoe1989}
Delano\"e, P.: Remarques sur les vari\'et\'es localement hessiennes (1989).

\bibitem{Bieliavsky2006} P. Bieliavsky, M. Cahen, S. Gutt, J. Rawnsley, and L. Schwachh\"ofer, Symplectic connections, International Journal of Geometric Methods in Modern Physics, 3 (2006), no.~3, 375--420.

\bibitem{Gromov1986}
Gromov, M.: \textit{Partial Differential Relations}, Vol.~9. Springer Science \& Business Media (1986).

\bibitem{Chentsov1982} N. N. Chentsov, Statistical decision rules and optimal inference, American Mathematical Society, 1982.

\bibitem{Wang2022} Q. Wang, Y. Ma, K. Zhao and Y. Tian, A comprehensive survey of loss functions in machine learning, Annals of Data Science, 9(2), 187--212, 2022.

\bibitem{simms1976lectures}
Simms, D.J., Woodhouse, N.M.J.: Lectures on geometric quantization. Springer (1976).

\bibitem{loustau2017bi}
Loustau, B., Sanders, A.: Bi-Lagrangian structures and Teichm\"uller theory. arXiv preprint arXiv:1708.09145 (2017).

\bibitem{Amari2012} S. Amari, Differential-geometrical methods in statistics, Springer Science \& Business Media, 2012.

\bibitem{Matumoto1993} T. Matumoto, Any statistical manifold has a contrast function---On the $C^3$-functions taking the minimum at the diagonal of the product manifold, Hiroshima Mathematical Journal, 23 (1993), no.~2, 327--332.

\bibitem{Shima2007} H. Shima, The geometry of Hessian structures, World Scientific, 2007.

\bibitem{Calin2014} O. Calin and C. Udri{\c{s}}te, Geometric Modeling in Probability and Statistics, Vol.~121, Springer, New York, 2014.

\bibitem{Gnandi2026}
E. Gnandi, \emph{The topology of 3-dimensional Hessian manifolds}, Rev. R. Acad. Cienc. Exactas Fis. Nat. Ser. A Math. \textbf{120} (2026), no.~4, 137. doi:10.1007/s13398-026-01936-z.


\bibitem{nielsen2020elementary}
F. Nielsen,
An elementary introduction to information geometry,
\textit{Entropy}, \textbf{22}(10) (2020), Article 1100.

\end{thebibliography}
\end{document}